\newtheorem{deff}{Definition}[section]
\newtheorem{theorem}[deff]{Theorem}
\newtheorem{coro}[deff]{Corollary}
\newtheorem{prop}[deff]{Proposition}
\newtheorem{em-example}[deff]{Example}
\newtheorem{em-def}[deff]{Definition}        
\newtheorem{em-remark}[deff]{Remark}         
\newtheorem{em-question}[deff]{Question}
\newtheorem{problem}[deff]{Problem}
\newenvironment{example}{\begin{em-example} \em }{ \end{em-example}}
\newcommand{\pcal}{\mathcal {P}}
\def\emp{\emptyset}
\def\sm{\setminus}
\def\sub{\subseteq}
\def\om{\omega}
\DeclareMathSymbol{\res}{\mathord}{AMSa}{"16}
\def\:{\nobreak \hskip .1111em\mathpunct {}\nonscript \mkern
   -\thinmuskip {:}\hskip .3333emplus.0555em\relax}
\def\R{{\mathbb R}}
\def\cont{\mathfrak c}
\def\ii{{\mathfrak{i}}}
\def\:{\colon}
\begin{document}

\title[Regular rigid Korovin orbits]{Regular rigid Korovin orbits}
\author{Evgenij Reznichenko and Mikhail Tkachenko$\,^{1,\,2}$\orcidlink{0000-0002-1698-381X}}
\address{Department of General Topology and Geometry, 
Mechanics and Mathematics Faculty, M.~V.~Lomonosov Moscow 
State University, Leninskie Gory 1, Moscow, 199991 Russia}
\email{erezn@inbox.ru}
\address{Departamento de Matem\'aticas, Universidad Aut\'onoma Metropolitana, 
Av. San Rafael Atlixco 186, Col. Vicentina, Del. Iztapalapa, C.P. 09340, 
Mexico City, Mexico}
\email{mich@xanum.uam.mx}
\keywords{Korovin orbit; feebly compact; $\mathbb{R}$-rigid; $C$-embedded; 
semitopological group; quasitopological group}
\subjclass[2020]{22A30, 54H15 (primary), 54A25, 54C45 (secondary)}

\thanks{$^1\,$Corresponding author}
\thanks{$^2\,$This author was supported by grant number CAR-64356 
of the Program \lq\lq{Ciencia de Frontera 2019\rq\rq} of the CONACyT, Mexico.}

\date{January 31, 2025}

\selectlanguage{english}
\begin{abstract}
An example of an infinite regular feebly compact quasitopological 
group is presented such that all continuous real-valued functions
on the group are constant. The example is based on the use of 
Korovin orbits in $X^G$, where $X$ is a special regular countably 
compact space constructed by S.~Bardyla and L.~Zdomskyy in 2023 
and $G$ is an abstract Abelian group of an appropriate cardinality. 
Also, we study the interplay between the separation properties of the 
space $X$ and Korovin orbits in $X^G$. We show in particular that 
if $X$ contains two nonempty disjoint open subsets, then every
Korovin orbit in $X^G$ is Hausdorff. 
\end{abstract}
\selectlanguage{russian}

\maketitle

\section{Introduction}
According to the celebrated theorem of T.~Banakh and A.~Ravsky in \cite{BR},
every regular paratopological group is Tychonoff, that is, completely regular. 
Actually, the same article provides a more general fact: \emph{Every regular
topological monoid with open shifts is completely regular.} It is also shown
there that every Hausdorff paratopological group is function\-al\-ly Hausdorff,
that is, continuous real-valued functions on the group separate points. 

These results suggest that one could extend the theorem about the 
coincidence of regularity and complete regularity to other objects of 
Topological Algebra, such as semitopological groups or the smaller 
class of quasitopological groups. 

We show in Theorem~\ref{Th:MEx} that this is not the case and that
there exists a regular \emph{quasitopological group} $H$ (that is, a 
group with a topology such that the left and right translations and inversion 
in the group are continuous), but every continuous real-valued function on
$H$ is constant, that is, $H$ is \emph{$\mathbb{R}$-rigid}. Hence, $H$ 
is neither completely regular nor functionally Hausdorff. Additionally,
the space $H$ is \emph{feebly compact}, that is, every locally finite 
family of open sets in $H$ is finite. 

The group $H$ in Theorem~\ref{Th:MEx} is a \emph{Korovin orbit}
(see Section~\ref{sec:korovin} for more details). Every Korovin orbit 
is a subspace of a product space $X^G$, for some space $X$ and 
an abstract Abelian group $G$, that fills all countable subproducts 
in $X^G$.

The relationships between the separation properties of a product 
space $X=\prod_{i\in I} X_i$ and a dense subspace $Y$ of $X$ 
that fills all finite (or countable) subproducts of $X$ are studied in 
Section~\ref{sec:ssp}. We also provide some conditions on $Y$ 
and the factors $X_i$ implying that $Y$ is either $C$-embedded 
in $X$ or $\mathbb{R}$-rigid. The results obtained there are applied 
in Section~\ref{sec:korovin} for the study of the same problems 
for Korovin orbits $G_f$ in place of $Y$. 

In the very short Section~\ref{Sec:2}, we prove the aforementioned
Theorem~\ref{Th:MEx} by applying results from Sections~\ref{sec:ssp}
and~\ref{sec:korovin}. Four open problems in Section~\ref{Sec:OP} 
show a further direction of the study of dense subspaces of uncountable 
topological products and have a close connection with the contents of 
the article.

\section{Notation, terminology and preliminary facts}\label{SS1.1}

\subsection{Subspaces of topological products}\label{sec:d:ssp}
Given a topological product $X=\prod_{i\in I} X_i$ and a nonempty
subset $J$ of the index set $I$, we denote by $\pi_J$ the projection
of $X$ onto the subproduct $X_J=\prod_{i\in J} X_i$. A subset $Y$ 
of $X$ \emph{fills countable (finite) subproducts in $X$} if $\pi_J(Y)=X_J$, 
for each countable (finite) set $J\sub I$. Evidently, a set $Y\sub X$
that fills finite (countable) subproducts of $X$ is dense in $X$. 
\smallskip

The proof of the subsequent fact requires a standard application
of the $\Delta$-lemma (see Theorem on page~87 of \cite{Juh}) 
and, hence, is omitted. 

\begin{prop}\label{Le:TopPrN}
Let $X=\prod_{i\in I} X_i$ be a topological product such that 
the subproduct $X_J=\prod_{i\in J}X_i$ is feebly compact, 
for each countable set $J\sub I$. Then $X$ is also feebly 
compact. Furthermore, if a subspace $D$ of $X$ fills all
countable subproducts of $X$, then $D$ is also feebly compact.
\end{prop}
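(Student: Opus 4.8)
The plan is to reduce the feeble compactness of the whole product $X$ to that of a single countable subproduct $X_J$ supplied by the hypothesis. I would use the reformulation: a space $Z$ is feebly compact if and only if every sequence $(U_n)_{n\in\omega}$ of nonempty open subsets of $Z$ has a \emph{cluster point}, i.e.\ a point $p\in Z$ with $\{n\in\omega:U_n\cap V\neq\emptyset\}$ infinite for every neighbourhood $V$ of $p$. This is routine: if no such $p$ existed, then choosing for each $q\in Z$ a neighbourhood $V_q$ meeting only finitely many $U_n$, the family $\{U_n:n\in\omega\}$ would be infinite and locally finite---contradicting feeble compactness---unless some value $U$ occurs for infinitely many indices $n$, and then any point of $U$ would itself be a cluster point; conversely, picking distinct members of an infinite locally finite family of nonempty open sets yields a sequence with no cluster point.

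So let $(W_n)_{n\in\omega}$ be a sequence of nonempty open subsets of $X=\prod_{i\in I}X_i$. Since a cluster point of a sequence of smaller nonempty open sets is one for the original sequence, I may assume each $W_n=\bigcap_{i\in F_n}\pi_i^{-1}(V_{n,i})$ is basic open, with $F_n\subseteq I$ finite and every $V_{n,i}\subseteq X_i$ nonempty open. Put $J=\bigcup_{n\in\omega}F_n$, a countable subset of $I$, so that $X_J$ is feebly compact by hypothesis. As $\supp W_n\subseteq J$, we have $W_n=\pi_J^{-1}(\widehat W_n)$ for the nonempty basic open set $\widehat W_n:=\pi_J(W_n)\subseteq X_J$, and the reformulation provides a cluster point $y\in X_J$ of the sequence $(\widehat W_n)_{n\in\omega}$.

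Pick any $x\in X$ with $\pi_J(x)=y$; I claim $x$ is a cluster point of $(W_n)_{n\in\omega}$. A basic neighbourhood of $x$ has the form $V=\bigcap_{i\in G}\pi_i^{-1}(O_i)$ with $G\subseteq I$ finite and each $O_i$ an open neighbourhood of $x_i$; then $\widehat V:=\bigcap_{i\in G\cap J}\pi_i^{-1}(O_i)$ is a neighbourhood of $y$ in $X_J$, so $\widehat V\cap\widehat W_n\neq\emptyset$ for infinitely many $n$. For each such $n$ the set $T_n:=\pi_J^{-1}(\widehat V\cap\widehat W_n)\cap\bigcap_{i\in G\setminus J}\pi_i^{-1}(O_i)$ is nonempty---its remaining constraints involve coordinates outside $J$---and satisfies $T_n\subseteq V\cap W_n$; hence $V$ meets $W_n$ for infinitely many $n$, and $X$ is feebly compact. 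For the assertion about $D$, take nonempty relatively open sets $W_n\cap D$ with $W_n$ open in $X$, replace each $W_n$ by a basic open set still meeting $D$, and form $J$ and $y$ as before, but now invoke that $D$ \emph{fills countable subproducts} to choose the lift $p\in D$ with $\pi_J(p)=y$. Since $T_n$ has support inside the finite set $G\cup F_n$ and $D$ fills finite subproducts, $D\cap T_n\neq\emptyset$; hence $(V\cap D)\cap(W_n\cap D)\neq\emptyset$ for infinitely many $n$, so $D$ is feebly compact.

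The one place needing care is the bookkeeping in the lifting step: one must verify that ``infinitely many $n$'' survives the passage from $X_J$ back to $X$ (and to $D$), and this is exactly why the argument runs through cluster points rather than through local finiteness directly---the projected family $\{\widehat W_n:n\in\omega\}$ need not be locally finite in $X_J$ even when $\{W_n:n\in\omega\}$ is locally finite in $X$. Alternatively, one can follow the route indicated just before the statement and extract a suitable finite ``root'' of coordinates by a $\Delta$-system argument applied to the supports of a locally finite family; the passage to a countable subfamily used above is a shortcut avoiding the $\Delta$-system lemma. No separation axiom on $X$ or on the factors $X_i$ is used anywhere.
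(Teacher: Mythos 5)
Your proof is correct: the cluster-point reformulation of feeble compactness is established without any separation assumptions, the reduction to basic open sets and to the countable coordinate set $J$ is sound, and the lifting of a cluster point from $X_J$ to $X$ (and, via the filling hypothesis, to a point of $D$) is carried out with the right care, in particular the observation that each auxiliary set $T_n$ depends only on the finite set of coordinates $G\cup F_n$, so that $D$ meets it. Note, however, that the paper does not print a proof at all; it merely remarks that the statement follows by a ``standard application of the $\Delta$-lemma'' and omits the details. Your route is genuinely different from the one indicated: instead of extracting a $\Delta$-system from the supports of an infinite locally finite family, you pass to a countable subfamily, work with cluster points of sequences of nonempty open sets, and use the feeble compactness of the single countable subproduct $X_J$ directly. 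What this buys is a completely elementary, self-contained argument with no combinatorial lemma, and it makes explicit exactly where the hypothesis that $D$ fills countable (and hence finite) subproducts is used; what the $\Delta$-system route buys is uniformity with the rest of the paper, since the genuinely uncountable situations there (the pseudo-$\aleph_1$-compactness statements in Propositions~\ref{cor.fc.last} and~\ref{p:ssp:3}) do require the $\Delta$-lemma, whereas for feeble compactness the countable reduction you exploit suffices. Your closing remark that the projected family need not remain locally finite is exactly the right justification for arguing via cluster points rather than local finiteness.
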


A space is said to be \emph{pseudo-$\aleph_1$-compact} 
if every locally finite family of open sets in the space is countable. 
The following result is a special case of Theorem on page 90 of 
\cite{CN} if one takes $\varkappa=\om$ and $\alpha=\om_1$:

\begin{prop}\label{cor.fc.last}
Let $D$ be a subspace of a product $X=\prod_{i\in I} X_i$ that fills
all countable subproducts of $X$. Let also $g\colon D\to Z$ be a 
continuous mapping to a metrizable space $Z$. If the space $X$ 
is pseudo-$\aleph_1$-compact, then so is $D$ and one can find a 
countable set $J\sub I$ and a continuous mapping $h\colon\pi_J(D)\to Z$ 
such that $g=h\circ\pi_J\res{D}$.
\end{prop}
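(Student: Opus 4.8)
The plan is to prove Proposition~\ref{cor.fc.last} by reducing it to the cited theorem on page~90 of \cite{CN} applied with $\varkappa=\om$ and $\alpha=\om_1$, and then to dualize that theorem's conclusion from a statement about $X$ to a statement about the filling subspace $D$. First I would recall the content of that theorem: under the hypothesis that $X=\prod_{i\in I}X_i$ is pseudo-$\aleph_1$-compact (i.e. every locally finite family of open sets is countable), any continuous map from $X$ to a metrizable space $Z$ depends on only countably many coordinates, in the sense that it factors through some countable subproduct projection $\pi_J$. The two assertions to establish are then: (a) $D$ is itself pseudo-$\aleph_1$-compact; and (b) a continuous $g\colon D\to Z$ factors as $h\circ(\pi_J\res D)$ for some countable $J$ and continuous $h$ on $\pi_J(D)$.

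For part (a), the point is that a dense subspace which fills countable subproducts inherits pseudo-$\aleph_1$-compactness from $X$. If $\{V_\alpha:\alpha\in A\}$ is a locally finite family of nonempty open subsets of $D$, write each $V_\alpha=U_\alpha\cap D$ with $U_\alpha$ open in $X$; since $D$ is dense, the $U_\alpha$ are nonempty, and since $D$ meets every nonempty open set, local finiteness of $\{V_\alpha\}$ in $D$ transfers (via density) to local finiteness of $\{U_\alpha\}$ in $X$ — here I would use that a family of open sets in $X$ is locally finite iff its trace on a dense subspace is locally finite. Pseudo-$\aleph_1$-compactness of $X$ then forces $|A|\le\om$. (In fact this is already essentially Proposition~\ref{Le:TopPrN}'s style of argument, transplanted to the pseudo-$\aleph_1$-compact setting.)

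For part (b), the strategy is: extend $g$ to $X$ before factoring, or factor first and then restrict. The cleaner route is to observe that $g$ need not extend to $X$ in general, so instead I would argue directly. Since $D$ is pseudo-$\aleph_1$-compact by (a) and $D$ fills countable subproducts, $D$ is itself a "filling" subspace with the same compactness property, so I can apply the page~90 theorem of \cite{CN} \emph{to $D$ in place of $X$}: this is legitimate precisely because the cited theorem is stated (per the excerpt's phrasing "Let $D$ be a subspace of a product $\ldots$ that fills all countable subproducts") for such subspaces, not only for full products. That yields a countable $J\sub I$ and a continuous $h\colon\pi_J(D)\to Z$ with $g=h\circ(\pi_J\res D)$. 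So the substantive work is just checking the hypotheses of the cited theorem are met by $D$, which is exactly part (a) together with the given filling assumption.

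The main obstacle I anticipate is the transfer of local finiteness across the dense inclusion $D\hra X$ in part (a): one must be careful that "locally finite in $D$" really does imply "locally finite in $X$" for families of sets of the form $U_\alpha\cap D$ with $U_\alpha$ open in $X$ — the correct statement is that $\{U_\alpha\}$ is locally finite in $X$ iff $\{U_\alpha\cap D\}$ is locally finite in $D$, valid because every point of $X$ has a neighborhood meeting $D$ in a nonempty set and $D$ is dense. Once this lemma is in hand, everything else is bookkeeping: specializing the \cite{CN} parameters, verifying $D$ satisfies the theorem's hypotheses, and reading off the factorization. I would therefore organize the write-up as: (1) the local-finiteness transfer lemma and the deduction of pseudo-$\aleph_1$-compactness of $D$; (2) invocation of the cited theorem for $D$ to obtain $J$ and $h$; (3) a one-line remark that $g=h\circ\pi_J\res D$ is the desired conclusion.
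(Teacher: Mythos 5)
Your proposal has a genuine gap in part (a). The ``transfer lemma'' you rely on --- that for open sets $U_\alpha$ in $X$ the family $\{U_\alpha\}$ is locally finite in $X$ if (and only if) the traces $\{U_\alpha\cap D\}$ are locally finite in the dense subspace $D$ --- is false in the direction you need. Density gives you, for each point $d\in D$, an $X$-neighborhood meeting only finitely many $U_\alpha$, but it gives you nothing at points of $X\sm D$. Concretely, let $D$ be an uncountable discrete space and $X=\beta D$: the singletons $\{d\}$, $d\in D$, form an uncountable family of open subsets of $X$ whose traces on $D$ are discrete (hence locally finite) in $D$, yet the family is not locally finite in $X$ at any point of $X\sm D$. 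The same example shows that pseudo-$\aleph_1$-compactness simply does not pass to dense subspaces, so no argument using density alone can prove that $D$ is pseudo-$\aleph_1$-compact; the hypothesis that $D$ fills all countable subproducts is essential, and your part (a) never uses it. (Compare Proposition~\ref{Le:TopPrN}, whose analogous claim for feeble compactness is obtained via the $\Delta$-lemma applied to canonical open sets and the filling property, not via a density transfer.)

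Part (b) is also shaky as written: you justify applying the theorem of \cite{CN} ``to $D$ in place of $X$'' by quoting the phrasing ``Let $D$ be a subspace of a product \dots that fills all countable subproducts'' --- but that is the wording of the proposition you are asked to prove, not independently verified wording of the cited theorem, so the step is close to circular. In fact the paper offers no argument at all here: it states that the proposition \emph{is} the theorem on page~90 of \cite{CN} with $\varkappa=\om$ and $\alpha=\om_1$, and that theorem (for subspaces filling the relevant subproducts of a pseudo-$\aleph_1$-compact product) already yields both conclusions, the pseudo-$\aleph_1$-compactness of $D$ and the factorization $g=h\circ\pi_J\res D$. So either you take the route the paper takes and simply specialize the cited theorem (in which case your part (a) is superfluous), or you prove the pseudo-$\aleph_1$-compactness of $D$ honestly from the filling property (a $\Delta$-system argument on the finite supports of canonical open sets); the density-only lemma cannot be repaired.
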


\subsection{Axioms of separation}\label{sec:as}
Let $X$ be a space and $i\in \{0,1,2,3,3.5,4\}$.
We say that $X$ is a $T_i$-space if it satisfies condition $(T_i)$:
\begin{enumerate}
\item[$(T_{0})$]  Given distinct points $x,y\in X$, the exists an open 
                           set $U$ in $X$ that contains exactly one of the points $x,y$.

\item[$(T_{1})$]  Given distinct points $x,y\in X$, the exists an open 
                           set $U$ in $X$ such that $x\in U$ and $y\notin U$.
                          
\item[$(T_{2})$]  The Hausdorff separation property (two distinct points 
                           in $X$ have disjoint open neighborhoods).
                           
\item[$(T_{3})$] If $U$ is an open neighborhood of a point $x\in X$, then
                          there exists an open neighborhood $V$ of $x$ such that
                          $\overline{V}\subset U$.

\item[$(T_{3.5})$] If a closed set $F\sub X$ does not contain a point 
                             $x\in X$, then there exists a continuous real-valued 
                             function $f$ on $X$ satisfying $f(x)=1$ and $f(F)\sub \{0\}$.

\item[$(T_{4})$]  Every two closed disjoint subsets of $X$ have disjoint
                           open neigh\-bor\-hoods in $X$.
\end{enumerate}

Hausdorff spaces are the spaces satisfying condition $(T_2)$.
Regular spaces are defined to satisfy conditions $(T_3)$ and $(T_1)$.
The conjunction $(T_{3.5})\, \&\, (T_1)$ defines completely regular 
(Tychonoff) spaces, whereas the combination $(T_4)\, \&\,  (T_1)$ 
delineates normal spaces. \emph{A functionally Hausdorff} space 
$X$ is a space in which continuous real-valued functions separate 
points of $X$.

\subsection{Groups with a topology}\label{sec:tgroups}
A group with a topology is called \emph{semitopological} if multiplication 
in the group is separately continuous. Ellis \cite{Ellis1957-2} proved that 
a locally compact semitopological group is a topo\-logical group, that is, 
multiplication and inversion in the group are continuous. In his work 
\cite{Korovin1992}, Korovin demonstrated that a completely regular, 
countably compact Abelian semitopological group is a topo\-log\-ical group. 
Additionally, he provided an example of a pseudocompact semitopological 
Abelian group with discontinuous multiplication. In \cite{Reznichenko1994}, 
Korovin's theorem is extended to the non-Abelian case. 

In the same article \cite{Korovin1992}, a wide class of pseudocompact 
spaces was introduced that were afterwards called \emph{Korovin spaces} 
in \cite{Reznichenko2024gr,ReznichenkoTkachenko2024}. This class 
contains countably compact spaces, pseudocompact spaces of count\-able
tightness and some other important subclasses of pseudocompact
spaces. If a semitopological group $G$ is a Korovin space, then 
$G$ is a topological group \cite{Reznichenko1994} (in the case 
$G$ is Abelian, this is a Korovin theorem in \cite{Korovin1992}). 
A detailed study of the class of Korovin spaces is presented in
\cite{Reznichenko2024gr}. 

A semitopological group $G$ is called \emph{quasitopological} 
if inversion in the group is continuous. Every boolean semitopological 
group is evidently a quasitopological group.

\subsection{Korovin orbits}\label{sec:d:korovin}
In \cite{Korovin1992}, A.~Korovin presented an original method to constructing 
pseudocompact semitopological groups that are not topological groups.
An analysis of Korovin's method led to the notions of \emph{Korovin map} 
and \emph{Korovin orbit} \cite{ArhangelskiiHusek2001,HernandezTkachenko2006,ReznichenkoTkachenko2024} 
(see also Theorem~2.4.13 in \cite{AT}). Korovin orbits are also used to construct pseudocompact quasitopo\-logical groups with various additional properties \cite{ArhangelskiiHusek2001,HernandezTkachenko2006,Batikova2009,TangLinXuan2021}.
Nontrivial Korovin orbits cannot be Korovin spaces. 

Let $X$ be a space with $|X|>1$ and $G$ be an infinite Abelian 
group. The group $G$ naturally acts on the product space $X^G$.
The action is defined by the formula $(gf)(x)=f(xh)$ for all $g,x\in G$ 
and $f\in X^G$. We denote by $G_f=Gf=\{gf: g\in G\}$ the 
\emph{orbit} of an element $f\in X^G$ under the action of $G$ 
on $X^G$. The element $f$ is called a \emph{Korovin mapping} 
if $G_f$ fills in all countable subproducts in $X^G$, $\pi_M(G_f)=X^M$ 
for each countable set $M\sub G$. In this case, $G_f$ is called 
a \emph{Korovin orbit.}

A sequence $(g_n)_{n\in\om}\subset G$ is said to be \emph{exact}
if $g_k\neq g_n$ for distinct $k,n\in\om$. A mapping $f\colon G\to X$
is a Korovin mapping if and only if the following condition holds:
\begin{enumerate}
\item[$(K_\om)$] If $(g_n)_{n\in\om}\subset G$ is an exact sequence 
and $(x_n)_{n\in\om}\subset X$, then there exists $g\in G$ such that
$f(g_ng)=x_n$ for each $n\in\om$.
\end{enumerate}

\begin{theorem}[$\mathbf{A.\,V.~Korovin}$]\label{Kor-pse-tm}
Let $\kappa$ be an infinite cardinal with $\kappa=\kappa^\om$, 
$X$ be a topological space satisfying $1<|X|\leq\kappa$, and 
$G$ an Abelian group such that $|G|=\kappa$. Then there exists 
a Korovin mapping $f\colon G\to X$ and, hence, there exists a 
Korovin orbit in $X^G$.
\end{theorem}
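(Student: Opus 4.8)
The plan is to construct the Korovin mapping $f\colon G\to X$ by a transfinite recursion of length $\kappa$, handling one "requirement'' at each step. First I would enumerate all potential instances of condition $(K_\om)$: since $\kappa=\kappa^\om$, there are exactly $\kappa$ pairs $\big((g_n)_{n\in\om},(x_n)_{n\in\om}\big)$ consisting of an exact sequence in $G$ and an arbitrary sequence in $X$; fix an enumeration $\{(\bar g^\alpha,\bar x^\alpha):\alpha<\kappa\}$ of all of them. The goal is to build an increasing chain of partial functions $f_\alpha$, each with domain of size $<\kappa$, such that at stage $\alpha$ we secure a witness $g_\alpha\in G$ with $f(g^\alpha_n g_\alpha)=x^\alpha_n$ for all $n$; then $f=\bigcup_{\alpha<\kappa} f_\alpha$, extended arbitrarily on the (possibly empty) remainder of $G$, will satisfy $(K_\om)$, hence be a Korovin mapping. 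The equivalence of $(K_\om)$ with the property "$\pi_M(G_f)=X^M$ for every countable $M\sub G$'' is already recorded in the text, so once $(K_\om)$ holds we immediately get a Korovin orbit $G_f$ in $X^G$.

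The recursion step is the heart of the argument. Suppose $f_{<\alpha}=\bigcup_{\beta<\alpha}f_\beta$ has been built, with $|\dom f_{<\alpha}|<\kappa$ (this is maintained since $\alpha<\kappa$, each step adds countably many points to the domain, and $\kappa$ is regular — note $\kappa=\kappa^\om$ forces $\mathrm{cf}(\kappa)>\om$, and in fact one only needs $|\dom f_{<\alpha}|\le|\alpha|\cdot\om<\kappa$). Write $(\bar g,\bar x)=(\bar g^\alpha,\bar x^\alpha)$. I need to choose $g_\alpha\in G$ so that the points $g_n g_\alpha$ ($n\in\om$) are pairwise distinct (automatic, since the $g_n$ are distinct) and, crucially, so that $\{g_n g_\alpha:n\in\om\}$ is disjoint from $\dom f_{<\alpha}$. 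If that disjointness holds, I simply set $f_\alpha=f_{<\alpha}\cup\{(g_n g_\alpha,x_n):n\in\om\}$, a well-defined extension. To find such a $g_\alpha$: for each pair $n\in\om$ and each $d\in\dom f_{<\alpha}$, the equation $g_n t=d$ has the unique solution $t=g_n^{-1}d$ in the group $G$; so the set $B$ of "bad'' values of $t$ is $\{g_n^{-1}d:n\in\om,\ d\in\dom f_{<\alpha}\}$, of cardinality at most $\om\cdot|\dom f_{<\alpha}|<\kappa=|G|$. Hence $G\sm B\neq\emp$, and any $g_\alpha\in G\sm B$ works.

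The main obstacle — and the only subtlety — is bookkeeping: one must be sure that the enumeration really exhausts all relevant sequences (hence the hypothesis $\kappa=\kappa^\om$, which gives $|{}^{\om}G|\cdot|{}^{\om}X|=\kappa^\om\cdot\kappa^\om=\kappa$) and that the cardinality bound $|\dom f_{<\alpha}|<\kappa$ is preserved at limit stages, which again rests on $\mathrm{cf}(\kappa)>\om$. Everything else is a routine group-theoretic cancellation argument. A minor point worth stating explicitly: if $\dom f=\bigcup_{\alpha<\kappa}\dom f_\alpha$ is a proper subset of $G$, extend $f$ to all of $G$ by sending the leftover elements to any fixed point of $X$ (here $|X|>1$ is not even needed, only $X\neq\emp$); the verification of $(K_\om)$ uses only the values fixed during the recursion, so this extension is harmless. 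This completes the construction and the proof. \QED
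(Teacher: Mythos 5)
Your construction is correct: the transfinite recursion of length $\kappa$, enumerating all pairs of an exact sequence in $G$ and a value sequence in $X$ (possible since $\kappa^\om=\kappa$), and at each stage choosing a translate $g_\alpha$ avoiding the fewer-than-$\kappa$ ``bad'' solutions $g_n^{-1}d$ so that the new requirement can be met on fresh points, is exactly the standard Korovin argument. The paper itself gives no proof of Theorem~\ref{Kor-pse-tm}; it is quoted as Korovin's result (see \cite{Korovin1992} and Theorem~2.4.13 of \cite{AT}), and your write-up reproduces that classical proof faithfully, including the reduction to condition $(K_\om)$, which the paper records as equivalent to the definition of a Korovin mapping. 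One small slip: $\kappa=\kappa^\om$ forces $\mathrm{cf}(\kappa)>\om$ but does not force $\kappa$ to be regular; this is immaterial, since, as you yourself note, the only bound needed at every (including limit) stage is $|\alpha|\cdot\om<\kappa$, which holds for all $\alpha<\kappa$ without any regularity assumption.
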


It is easy to see that the mapping $\ii\: G\to G_f$, $g\mapsto gf$ 
is a bijection \cite[Proposition~2.4.14{\hskip1pt}(1)]{AT}. This
bijection is used to define multiplication in $G_f$ making use 
of multiplication in the group $G$ as follows: $\ii(g)\ii(h)=\ii(gh)$ for 
all $g,h\in G$. Hence, the groups $G$ and $G_f$ are algebraically
isomorphic. When $G_f$ carries the topology of a subspace of $X^G$, 
this multiplication turns $G_f$ into a semitopological group (see 
\cite{Korovin1992} or \cite[Proposition~2.4.14{\hskip1pt}(3)]{AT}). 
In what follows we will sometimes identify $G$ with the corresponding 
Korovin orbit $G_f$. If the group $G$ is boolean, then $G_f$ is a 
quasitopological group.

In \cite{ReznichenkoTkachenko2024}, the following quite 
unexpected properties of Korovin orbits are established:

\begin{theorem}[{\cite[Theorem 1]{ReznichenkoTkachenko2024}}]\label{t:korovin:1}
	Let $G$ be an Abelian group, $X$ be a space with $|X|\geq 2$, and 
	$f\colon G\to X$  a Korovin mapping.
	   \begin{enumerate}
			\item
			If $X$ is not anti-discrete, then every countable subset 
			of the Korovin orbit $G_f$ is closed and discrete in $G_f$.
			\item
			If there exists a non-constant continuous real-valued function 
			on $X$, then every countable subset in the Korovin orbit $G_f$ 
			is $C^\ast$-embedded in $G_f$.
			\item
			If there exists an unbounded continuous real-valued function 
			on $X$, then any countable subset of the Korovin orbit $G_f$ 
			is $C$-embedded in $G_f$.
	\end{enumerate}
\end{theorem}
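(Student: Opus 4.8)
The plan is to derive all three parts from one uniform consequence of condition $(K_\om)$. Using the bijection $\ii\colon G\to G_f$, $g\mapsto gf$, I write the given countable set as $C=\{a_nf:n\in\om\}$ with the $a_n$ pairwise distinct (when $C$ is finite one simply works with a finite index set, extended to an infinite exact sequence in $G$, which is possible because $G$ is infinite). The workhorse is the following \emph{prescription lemma}: since $(gf)(x)=f(xg)$ and $G$ is Abelian, applying $(K_\om)$ to the exact sequence $(a_n)_n$ and an arbitrary target $(y_n)_n\sub X$ produces a coordinate $c\in G$ such that $(a_nf)(c)=f(a_nc)=y_n$ for every $n$. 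In words: along the countably many coordinates of $X^G$ indexed by the translates of the $a_n$, the members of $C$ can be driven to realize any prescribed pattern of values. All the remaining work is bookkeeping on top of this.

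For part~(1), fix a nonempty open set $U\subsetneq X$ --- it exists precisely because $X$ is not anti-discrete --- and points $u\in U$, $v\in X\sm U$. Given an arbitrary $q=bf\in G_f$, I would build an exact sequence whose first term is $b$ (when no $a_n$ with $a_nf\in C$ equals $b$) or is $a_{n_0}$ followed by the remaining $a_n$ (when $b=a_{n_0}$), and then apply the prescription lemma with the target that assigns $u$ to the first term and $v$ to every later term. The resulting coordinate $c$ makes $W=\{p\in X^G:p(c)\in U\}$ an open neighbourhood of $q$ with $W\cap C\sub\{q\}$, equality holding exactly when $q\in C$ (injectivity of $\ii$ excludes accidental coincidences among the $a_nf$). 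Since $q\in G_f$ is arbitrary, $C$ is simultaneously closed and discrete in $G_f$. The step I expect to be the real crux of the whole theorem is the observation that a \emph{single} coordinate separates $q$ from all of $C\sm\{q\}$ at once; this is exactly where $(K_\om)$ must be used in its full ``all countable patterns simultaneously'' strength, not just a finite approximation.

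For part~(2), a non-constant continuous real-valued function on $X$ forces $X$ not to be anti-discrete, so $C$ is closed and discrete by~(1); hence every bounded $\phi\colon C\to\R$ is automatically continuous and only its extension is at issue. Composing the given function with a piecewise-linear retraction of $\R$ onto $[0,1]$ yields a continuous $\psi\colon X\to[0,1]$ with $\psi(x_0)=0$ and $\psi(x_1)=1$ for suitable $x_0,x_1\in X$. After an affine normalization we may assume $\phi(C)\sub[0,1]$; writing $\phi(a_nf)=\sum_{j\ge 1}2^{-j}\eps_{n,j}$ in binary, the prescription lemma applied for each $j$ to the target $(x_{\eps_{n,j}})_n$ gives a coordinate $c_j\in G$ with $\psi\bigl((a_nf)(c_j)\bigr)=\eps_{n,j}$. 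Then $\Phi(p)=\sum_{j\ge 1}2^{-j}\psi\bigl(p(c_j)\bigr)$ is a uniform limit of continuous functions on $X^G$, hence continuous and $[0,1]$-valued, and satisfies $\Phi(a_nf)=\phi(a_nf)$; its restriction to $G_f$ is the required bounded extension. The idea worth isolating is ``one coordinate per binary digit'', which upgrades the qualitative prescription lemma into quantitative control of real values; I expect this to be the only genuinely clever point in~(2).

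For part~(3), set $\psi=|\psi_0|$ for an unbounded continuous real-valued function $\psi_0$ on $X$, so $\psi\colon X\to[0,\infty)$ is continuous with $\sup_{x\in X}\psi(x)=\infty$; again $C$ is closed and discrete. Given $\phi\colon C\to\R$, I would choose for each $n$ a point $y_n\in X$ with $\psi(y_n)>|\phi(a_nf)|+1$, use the prescription lemma to get $g_0\in G$ with $f(a_ng_0)=y_n$, and set $A(p)=\psi\bigl(p(g_0)\bigr)$, a continuous function on $X^G$ with $A(a_nf)>|\phi(a_nf)|+1>0$. Then $h_n=\phi(a_nf)/A(a_nf)$ defines a bounded function $C\to(-1,1)$, which by part~(2) extends to a bounded continuous $H\colon G_f\to\R$ with $H(a_nf)=h_n$, so the product $\Phi=(A|_{G_f})\cdot H$ is continuous on $G_f$ with $\Phi(a_nf)=A(a_nf)h_n=\phi(a_nf)$. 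Hence $\phi$ extends and $C$ is $C$-embedded. The mechanism is that the unbounded factor $A$ absorbs the growth of $\phi$ and reduces the problem to the $C^\ast$-embedding already established in~(2); beyond arranging the inequalities correctly I do not anticipate a serious obstacle here.
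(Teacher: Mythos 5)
Your argument is correct and complete: the ``prescription lemma'' is exactly condition $(K_\om)$ applied to the exact sequence enumerating (a superset of) the indices of $C$, the single-coordinate separation gives part (1), the coordinate-per-binary-digit series gives a continuous bounded extension on all of $X^G$ for part (2), and the unbounded factor $A$ correctly reduces part (3) to part (2). Note that the paper itself states this theorem as a citation of \cite{ReznichenkoTkachenko2024} and gives no proof, so your write-up supplies a self-contained argument in the same spirit as the $(K_\om)$-based manipulations the paper does show (e.g.\ in Proposition~\ref{Prop:Hau}); I see no gap beyond routine bookkeeping (the affine normalization when $\phi$ is constant, and truncating the extension if a bounded one is insisted upon in (3)).
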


\section{Subspaces of topological products}\label{sec:ssp}
Let  $X=\prod_{i\in I} X_i$ is a topological product and $Y$ be 
a subspace of $X$. Our aim is to find relationships between
the separation properties of the space $Y$ and those of the
factors $X_i$ provided $Y$ fills the finite (countable) subproducts
of $X$.

Assume that $\pcal$ is a class of spaces closed under arbitrary 
topological products and taking subspaces. Clearly, if $X_i\in\pcal$ 
for each $i\in I$, then $X\in\pcal$ and $Y\in\pcal$. Therefore, the 
subsequent statement is immediate.

\begin{prop}\label{p:ssp:1}
Let $\pcal$ be one of the following eight classes:
$T_i$-spaces for some $i\in\{0,1,2,3,3.5\};$ 
functionally Hausdorff spaces; regular spaces; 
completely regular spaces. If $X_i\in\pcal$ for
each $i\in I$, then the product space $X=\prod_{i\in I} X_i$ 
as well as every subspace $Y$ of $X$ are also in $\pcal$.
\end{prop}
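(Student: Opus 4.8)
The plan is to reduce everything to the observation stated immediately before the proposition: a class that is closed under arbitrary topological products and under passing to subspaces automatically has the asserted property. Hence it suffices to verify that each of the eight listed classes enjoys both of these closure properties. Since the class of regular spaces is the intersection $(T_3)\cap(T_1)$ and the class of completely regular spaces is $(T_{3.5})\cap(T_1)$, and an intersection of two product-closed (respectively, subspace-closed) classes is again product-closed (respectively, subspace-closed), I only need to treat the six ``atomic'' classes $T_i$ for $i\in\{0,1,2,3,3.5\}$ together with functional Hausdorffness.

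\emph{Hereditarity.} For $(T_0)$, $(T_1)$, $(T_2)$ and for functional Hausdorffness this is routine --- intersect the separating open set, or restrict the separating function, to the subspace $A$. For $(T_3)$: given $x\in A$ and an open neighborhood $A\cap U'$ of $x$ in $A$ with $U'$ open in $X$, pick $V'$ open in $X$ with $x\in V'$ and $\cl_X V'\sub U'$; then $V=A\cap V'$ works since $\cl_A V\sub A\cap\cl_X V'\sub A\cap U'$. For $(T_{3.5})$: if $F$ is closed in $A$ and $x\in A\sm F$, then writing $A\sm F=A\cap U'$ with $U'$ open in $X$ produces a neighborhood $U'$ of $x$ disjoint from $F$, so $x\notin\cl_X F$; applying $(T_{3.5})$ in $X$ to $x$ and the closed set $\cl_X F$ and restricting the resulting function to $A$ finishes the case, because $F\sub\cl_X F$.

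\emph{Productivity.} Write $X=\prod_{i\in I}X_i$ with projections $\pi_i$ and each $X_i$ in the class. Two distinct points of $X$ differ in some coordinate $i_0$; pulling back through $\pi_{i_0}$ an open set (or a continuous real-valued function) witnessing the relevant separation of the $i_0$-th coordinates handles $(T_0)$, $(T_1)$, $(T_2)$ and functional Hausdorffness at once. For $(T_3)$: given $x\in X$ and a basic open neighborhood $W=\bigcap_{i\in F}\pi_i^{-1}(U_i)$ with $F$ finite, choose $V_i\ni x_i$ open in $X_i$ with $\cl_{X_i}V_i\sub U_i$ and set $V=\bigcap_{i\in F}\pi_i^{-1}(V_i)$; then $V$ is an open neighborhood of $x$ and $\cl_X V\sub\bigcap_{i\in F}\pi_i^{-1}(\cl_{X_i}V_i)\sub W$, since each $\pi_i^{-1}(\cl_{X_i}V_i)$ is closed and contains $V$. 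For $(T_{3.5})$: given $x\in X$ and a closed set $F$ with $x\notin F$, fix a basic open $W=\bigcap_{i\in F_0}\pi_i^{-1}(U_i)\ni x$ with $W\cap F=\emp$; for each $i\in F_0$ take a continuous $g_i\colon X_i\to[0,1]$ with $g_i(x_i)=1$ and $g_i(X_i\sm U_i)\sub\{0\}$, and put $g=\prod_{i\in F_0}(g_i\circ\pi_i)$. Then $g$ is continuous and real-valued, $g(x)=1$, and $g$ vanishes on $F$, because every $z\in F$ lies outside $W$, hence $\pi_i(z)\notin U_i$ for some $i\in F_0$.

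Having verified both closure properties for all eight classes, the observation preceding the statement yields at once that $X\in\pcal$ and that every subspace $Y$ of $X$ lies in $\pcal$. There is no genuine obstacle in the argument; the only points deserving a moment's care are the hereditarity of $(T_{3.5})$ --- specifically the remark that a point of $A$ lying outside a relatively closed set $F$ also lies outside $\cl_X F$ --- and, on the product side, the use of a finite product (or a minimum) of the maps $g_i\circ\pi_i$ to assemble a separating function on $X$ out of separating functions on the factors.
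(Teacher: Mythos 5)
Your argument is correct and follows exactly the route the paper takes: the paper simply observes that each of the eight classes is closed under arbitrary products and under passing to subspaces and declares the statement immediate, while you additionally spell out the standard verifications of these closure properties (all of which check out, including the slightly delicate hereditarity of $(T_{3.5})$ via $\cl_X F$ and the product of the pulled-back functions for productivity of $(T_{3.5})$).
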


\begin{prop}\label{p:ssp:2}
Suppose that a subspace $Y$ of $X=\prod_{i\in I} X_i$ fills all 
finite subproducts of $X$. If{\hskip1pt} $Y$ is a $T_3$-space, 
then each factor $X_i$ is also a $T_3$-space.
\end{prop}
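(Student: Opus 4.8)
The plan is to argue one factor at a time. Fix $i_0\in I$, a point $a\in X_{i_0}$ and an open set $U\sub X_{i_0}$ with $a\in U$; we must produce an open set $U'\sub X_{i_0}$ with $a\in U'$ and $\overline{U'}\sub U$. Since $Y$ fills all finite subproducts, $\pi_{i_0}(Y)=X_{i_0}$, so there is $y\in Y$ with $y_{i_0}=a$. As $\pi_{i_0}^{-1}(U)\cap Y$ is an open neighbourhood of $y$ in $Y$ and $Y$ is a $T_3$-space, we may choose an open set $V$ of $Y$ with $y\in V$ and $\overline{V}^Y\sub\pi_{i_0}^{-1}(U)\cap Y$. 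Shrinking $V$, fix a finite set $F\sub I$ with $i_0\in F$ and open sets $U_j\sub X_j$ for $j\in F$ so that $y\in W:=Y\cap\pi_F^{-1}\bigl(\prod_{j\in F}U_j\bigr)\sub V$. Set $U':=U_{i_0}$. Then $a=y_{i_0}\in U'$, and $U'\sub U$: for any $c\in U'$ the filling property provides $y''\in Y$ whose projection $\pi_F(y'')$ has $i_0$-th coordinate $c$ and $j$-th coordinate $y_j$ for $j\in F\sm\{i_0\}$, so $y''\in W\sub\pi_{i_0}^{-1}(U)$ and hence $c\in U$.

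It remains to verify that the closure $\overline{U'}$ of $U'$ in $X_{i_0}$ satisfies $\overline{U'}\sub U$. Let $b\in\overline{U'}$. By the filling property there is $y'\in Y$ such that $\pi_F(y')$ has $i_0$-th coordinate $b$ and $j$-th coordinate $y_j$ for $j\in F\sm\{i_0\}$; I claim $y'\in\overline{W}^Y$. Take any basic open neighbourhood $N=Y\cap\pi_G^{-1}\bigl(\prod_{k\in G}N_k\bigr)$ of $y'$ in $Y$, where $G\sub I$ is finite with $G\supseteq F$ (we may freely enlarge $G$) and each $N_k\sub X_k$ is open with $y'_k\in N_k$. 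Since $y'_{i_0}=b\in\overline{U_{i_0}}$, the open set $N_{i_0}\cap U_{i_0}$ is nonempty; choose $u_{i_0}$ in it. For $j\in F\sm\{i_0\}$ we have $y'_j=y_j\in U_j\cap N_j$, so put $u_j:=y_j$; for $k\in G\sm F$ choose any $u_k\in N_k$. The filling property then yields $z\in Y$ with $z_k=u_k$ for every $k\in G$. Now $z_j\in U_j$ for all $j\in F$, so $z\in W$, while $z_k\in N_k$ for all $k\in G$, so $z\in N$; thus $N\cap W\neq\emptyset$. Hence $y'\in\overline{W}^Y\sub\overline{V}^Y\sub\pi_{i_0}^{-1}(U)$, so $b=y'_{i_0}\in U$. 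This proves $\overline{U'}\sub U$, and as $U'$ is open with $a\in U'$, the factor $X_{i_0}$ is a $T_3$-space.

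I expect the only genuine subtlety to be that the $T_3$ axiom, phrased here as a property of open neighbourhoods, is not preserved by arbitrary continuous open surjections; thus, although $\pi_{i_0}\res Y\colon Y\to X_{i_0}$ is a continuous open surjection, one cannot simply invoke such a preservation theorem and must exploit the product structure directly — precisely through the two applications of the filling property above (first to lift the point $(b,(y_j)_{j\in F\sm\{i_0\}})$ to $y'\in Y$, and then, for each prescribed basic neighbourhood of $y'$, to push a point of $W$ into it). Note that the finite index set $G$ at which the second application is made depends on the neighbourhood, which is exactly why the hypothesis concerns all finite subproducts. One could instead first establish $\overline{W}^Y=Y\cap\pi_F^{-1}\bigl(\prod_{j\in F}\overline{U_j}\bigr)$ from the density of $Y$ in $X$ (itself a consequence of the filling hypothesis) together with the identities $\overline{\pi_F^{-1}(A)}=\pi_F^{-1}(\overline{A})$ and $\overline{\prod_{j\in F}U_j}=\prod_{j\in F}\overline{U_j}$, and then conclude as above; the direct verification of $y'\in\overline{W}^Y$ given here sidesteps these closure computations.
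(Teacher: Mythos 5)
Your proof is correct and follows essentially the same route as the paper: lift $a$ to a point of $Y$, apply the $T_3$ property of $Y$ inside $\pi_{i_0}^{-1}(U)$ to obtain a basic (canonical) open set $W$ with finite support $F\ni i_0$, take $U'=W_{i_0}$ as the required neighbourhood of $a$, and use the filling of finite subproducts to push $\overline{U'}\sub U$ down to the factor. The only difference is in one technical step: the paper computes the closure of $W$ in $X$ via density of $Y$ and the identity $\overline{W}=\pi_F^{-1}\bigl(\prod_{i\in F}\overline{W_i}\bigr)$ (arguing by contradiction), whereas you verify $y'\in\overline{W}^{\,Y}$ directly through a second application of the filling property at an enlarged finite set $G$ --- the alternative you mention at the end is precisely the paper's argument.
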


\selectlanguage{english}
\begin{proof}
Assume that the subspace $Y$ of $X$ satisfies the $T_3$
separation axiom. Take any index $i_0\in I$ and let $U$ be an 
open neighborhood of an arbitrary point $a\in X_{i_0}$. Choose 
a point $x\in Y$ with $\pi_{i_0}(x)=a$, where $\pi_{i_0}\colon 
X\to X_{i_0}$ is the projection, $\pi_{i_0}(y)=y_{i_0}$ for every 
$y\in X$. Clearly, $x$ is an element of the open set $V=
\pi_{i_0}^{-1}(U)$. Since $Y$ is a $T_3$-space, there exists 
a canonical open set $W=\prod_{i\in I} W_i$ in $X$ such that 
$x\in W$ and $cl_{Y}(W\cap Y)\sub V\cap Y$ or, by the density 
of $Y$ in $X$, $Y\cap cl_{X}W\sub Y\cap V$. Let $F=
\{i\in I: W_i\neq X_i\}$. Diminishing the set $W$, if necessary, 
we can assume that $i_0\in F$. Notice that
$a=\pi_{i_0}(x)\in \pi_{i_0}(W)=W_{i_0}$.

Let us verify that $cl_X W_{i_0}\sub U$. Suppose for a 
contradiction that there exists a point $b\in (cl_X W_{i_0})
\sm U$. Take any point $c\in P=\prod_{i\in F} cl_{X_i} W_i$
such that $c_{i_0}=b$. Since $\pi_F(Y)=X_F$, there exists 
a point $y\in Y$ such that $\pi_F(y)=c$. Clearly, $y_{i_0}=
c_{i_0}=b$. Notice that $W=\pi_F^{-1}\pi_F(W)$ and $cl_{X} W=
\pi_F^{-1} \big(cl_{X^F}\pi_F(W)\big)=\pi_F^{-1}(P)$. Therefore, 
$y\in \pi_F^{-1}(P)\cap Y=(cl_{X} W)\cap Y$. Since
$(cl_{X} W)\cap Y\sub V$, we see that $y\in V=
\pi_{i_0}^{-1}(U)$ and, hence, $b=\pi_{i_0}(y)\in U$.
This contradiction proves our claim and implies that 
$X_{i_0}$ is a $T_3$-space.
\end{proof}

An analogue of Proposition~\ref{p:ssp:1} is valid for $T_{3.5}$ 
separation property. 

\begin{prop}\label{p:ssp:2+1}
Suppose that a subspace $Y$ of $X=\prod_{i\in I} X_i$ fills 
all finite subproducts of $X$ and is $C^\ast$-embedded in $X$. 
If $Y$ is a $T_{3.5}$-space, then each factor $X_i$ is also a 
$T_{3.5}$-space.
\end{prop}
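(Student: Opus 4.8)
The plan is to mimic the proof of Proposition~\ref{p:ssp:2}, replacing the closure-based argument for $T_3$ with a function-based one using the $C^\ast$-embedding hypothesis. First I would fix an index $i_0\in I$, a point $a\in X_{i_0}$, and a closed set $C\sub X_{i_0}$ with $a\notin C$. The goal is to produce a continuous real-valued function on $X_{i_0}$ separating $a$ from $C$. Choose $x\in Y$ with $\pi_{i_0}(x)=a$; then $x$ lies outside the closed set $\pi_{i_0}^{-1}(C)$ of $X$, so $x\in Y\sm \pi_{i_0}^{-1}(C)$, and the latter is a closed subset of $Y$ not containing $x$ (here I only use that $Y\cap\pi_{i_0}^{-1}(C)$ is closed in $Y$; it need not be $\pi_{i_0}^{-1}(C)\cap Y$ if $Y$ is not dense, but $Y$ does fill finite subproducts, hence is dense, so this set is exactly $\pi_{i_0}^{-1}(C)\cap Y$). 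Since $Y$ is $T_{3.5}$, there is a continuous $g\colon Y\to[0,1]$ with $g(x)=1$ and $g$ vanishing on $Y\cap\pi_{i_0}^{-1}(C)$.

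Next I would use the $C^\ast$-embedding of $Y$ in $X$ to extend $g$ to a continuous bounded function $\tilde g\colon X\to\R$; by composing with a retraction of $\R$ onto $[0,1]$ if desired we may keep values in $[0,1]$, though this is not essential. The crux is then to push $\tilde g$ down to a function on the single factor $X_{i_0}$. For this I would apply the standard fact — the same $\Delta$-system / finite-support reasoning already invoked for Proposition~\ref{Le:TopPrN} and Proposition~\ref{cor.fc.last} — that a continuous real-valued function on a product depends on only countably many coordinates; more precisely, since $g$ is built from a single canonical open set plus the $T_{3.5}$ witness, one can actually arrange $\tilde g$ to depend on a \emph{finite} set $F\sub I$ of coordinates. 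Enlarging $F$ to contain $i_0$ and using that $Y$ fills the finite subproduct $X_F$, I would argue that $\tilde g$ factors as $\tilde g = h\circ\pi_F$ for some continuous $h\colon X_F\to\R$, and then that the restriction of $h$ to the slice $\{a\}$ in the remaining coordinates of $F\sm\{i_0\}$ — chosen to agree with the coordinates of the point $x$ — yields the desired function $\varphi\colon X_{i_0}\to\R$. Concretely, pick the point $p\in X_F$ whose $i_0$-th coordinate is free and whose other coordinates equal those of $\pi_F(x)$, and set $\varphi(t)=h(p[i_0\mapsto t])$.

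It remains to check the two separation values. We get $\varphi(a)=h(\pi_F(x))=\tilde g(x)=g(x)=1$. For $C$: given $t\in C$, the point $x'\in X$ with $x'_{i_0}=t$ and $x'_i = x_i$ for $i\neq i_0$ lies in $\pi_{i_0}^{-1}(C)$; since $Y$ fills the finite subproduct $X_F$ we can find $y\in Y$ with $\pi_F(y)=p[i_0\mapsto t]$, and then $y\in Y\cap\pi_{i_0}^{-1}(C)$ because its $i_0$-coordinate is $t\in C$, so $\varphi(t)=h(p[i_0\mapsto t])=h(\pi_F(y))=\tilde g(y)=g(y)=0$. Together with $(T_1)$ passing to factors by Proposition~\ref{p:ssp:1} (or directly), this shows $X_{i_0}$ is $T_{3.5}$.

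The main obstacle I anticipate is justifying that $\tilde g$ may be taken to depend on only finitely many coordinates, or at least that it factors through some $\pi_F$ with $i_0\in F$ and $F$ finite: a priori $C^\ast$-embedding gives an extension depending on arbitrarily many coordinates. One route is to note $g$ itself is continuous on $Y$ and, using density of $Y$ and Proposition~\ref{cor.fc.last}-style arguments, reduce to a countable set $J$ of coordinates; but collapsing countably many coordinates down to the single coordinate $i_0$ requires knowing the \emph{other} coordinates can be fixed at the values $x_i$ without destroying the separation — which is exactly where one must re-select the witness $g$ so that it is constant (equal to its value at $x$) on the fibre over $\pi_{i_0}(x)$ after fixing $J\sm\{i_0\}$. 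The clean fix is to start, as in the $T_3$ proof, from a \emph{canonical} (finitely supported) neighborhood $W$ of $x$ with $cl_Y(W\cap Y)$ small, shrink $g$ to be supported in $W\cap Y$, and thereby obtain from the outset an extension $\tilde g$ supported in the finitely-supported open set $\pi_F^{-1}\pi_F(W)$; this makes the factorization through $\pi_F$ automatic. Handling this reduction carefully is the only delicate point; the rest is the bookkeeping above.
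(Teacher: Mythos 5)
Your reduction to finitely many coordinates is the step that does not hold up, and it is where the argument genuinely breaks. The verification $\varphi(t)=h(\pi_F(y))=\tilde g(y)$ needs the factorization $\tilde g=h\circ\pi_F$, but nothing in the hypotheses yields it: the extension supplied by $C^\ast$-embedding may depend on arbitrarily many coordinates, and your proposed ``clean fix'' --- arranging $\tilde g$ to be supported in the cylinder $\pi_F^{-1}\pi_F(W)$ and declaring the factorization ``automatic'' --- is false, since a function supported in a finitely-supported cylinder need not be constant on the fibres of $\pi_F$ (already on $[0,1]^2$, a function supported in $[0,1/2]\times[0,1]$ need not factor through the first coordinate). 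The alternative route you mention, via Proposition~\ref{cor.fc.last}, is also unavailable: it requires pseudo-$\aleph_1$-compactness of $X$, which is not assumed here (and is exactly the extra hypothesis (a) of Proposition~\ref{p:ssp:3}, not of this proposition).

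The point you are missing is that no coordinate reduction is needed at all. Fix $a\in Y$ with $\pi_{i_0}(a)=x_0$, write $a=(x_0,y_0)$ with $y_0\in P=\prod_{i\in I\setminus\{i_0\}}X_i$, take the $T_{3.5}$ witness $g$ on $Y$ for $a$ and $V=\pi_{i_0}^{-1}(U)\cap Y$, extend it to $\tilde g$ on $X$, and simply restrict $\tilde g$ to the slice through $a$: put $h(x)=\tilde g(x,y_0)$ for $x\in X_{i_0}$. Then $h(x_0)=g(a)=1$; and for $x\notin U$ the set $Y\cap\pi_{i_0}^{-1}(x)$ is dense in the fibre $\{x\}\times P$ (because $Y$ fills finite subproducts), $g$ vanishes on that set since it lies in $Y\setminus V$, so by continuity $\tilde g$ vanishes on the whole fibre, in particular at $(x,y_0)$, giving $h(x)=0$. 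This is the paper's argument; your slice formula $\varphi(t)=h(p[i_0\mapsto t])$ is the right object, but it must be fed by $\tilde g$ directly together with this fibre-density observation, not by a nonexistent factorization through $\pi_F$.
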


\begin{proof}
Let $i_0\in I$ and $U$ be an open neighborhood of a point 
$x_0\in X_{i_0}$. Take an element $a\in Y$ such that 
$\pi_{i_0}(a)=x_0$, where $\pi_{i_0}\colon X\to X_{i_0}$ 
is the projection. Then $V=\pi_{i_0}^{-1}(U)\cap Y$ is an open 
neighborhood of $a$ in $Y$. By our assumption, $Y$ is a 
$T_{3.5}$-space, so there exists a continuous function $g$ 
on $Y$ with values in the closed unit interval $[0,1]$ such 
that $g(a)=1$ and $g(Y\sm V)=\{0\}$. Let $\tilde{g}$ be an
extension of $g$ to a continuous function on $X$. The
point $a$ has the form $a=(x_0,y_0)$, where $y_0\in \prod
_{i\in I\sm\{i_0\}} X_i=P$. We define a continuous real-valued 
function $h$ on $X_{i_0}$ by letting $h(x)=\tilde{g}(x,y_0)$ for 
each $x\in X_{i_0}$. Then $h(x_0)=\tilde{g}(x_0,y_0)=g(a)=1$ and, 
similarly, $h(x)=\tilde{g}(x,y_0)=0$ for each $x\in X\sm U$, because 
$(x,y_0)\notin V$ in this case and the set $Y\cap \pi_{i_0}^{-1}(x)$ 
is dense in $\pi_{i_0}^{-1}(x)=\{x\}\times P$. We have thus shown 
that the function $h$ separates the point $x_0$ from the closed 
set $X_{i_0}\sm U$, so $X_{i_0}$ is a $T_{3.5}$-space. 
\end{proof}

It is important to note that the implications presented in 
Propositions~\ref{p:ssp:2} and~\ref{p:ssp:2+1} are, in fact, 
equivalencies, according to Proposition~\ref{p:ssp:1}. 
Furthermore, in Example~\ref{Exa:D}, it will be shown that 
neither of the two previous propositions holds true for regularity 
or complete regularity when substituting the $T_3$ or $T_{3.5}$ 
separation properties, respectively. 

Let us demonstrate that $\mathbb{R}$-rigidity behaves similarly to  
$T_3$ and $T_{3.5}$ separation properties. 

\begin{prop}\label{p:ssp:4}
Assume that a space $X_i$ is $\R$-rigid for each $i\in I$. Then 
$X=\prod_{i\in I} X_i$ is $\R$-rigid. Furthermore, if a subspace 
$Y\sub X$ is $C^\ast$-embedded in $X$, then $Y$ is also $\R$-rigid.
\end{prop}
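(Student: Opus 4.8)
The plan is to prove the two assertions separately, the first being essentially immediate and the second requiring the bulk of the argument.

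For the first assertion, I would argue by contradiction. Suppose $h\colon X\to\R$ is a non-constant continuous function, so there are points $p,q\in X$ with $h(p)\neq h(q)$. Since $h$ is continuous and $\R$ is metrizable, $h$ depends on only countably many coordinates; more precisely, by a standard fact about products (or one could invoke a version of Proposition~\ref{cor.fc.last} if the requisite compactness hypotheses were available, but cleaner is the elementary argument), there is a countable set $J\sub I$ such that $h$ factors through $\pi_J$. Then, fixing all coordinates outside $J$ equal to those of $p$, one obtains a continuous real-valued function on a single factor $X_{i}$ for some $i\in J$ that is non-constant — indeed, by moving one coordinate at a time from the $p$-configuration to the $q$-configuration along a finite path inside $X_J$, some single-coordinate change must alter the value of $h$, and restricting $h$ to that coordinate (all others fixed) yields a non-constant continuous function $X_i\to\R$, contradicting the $\R$-rigidity of $X_i$. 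I expect the only subtlety here is justifying the countable-support / factorization claim cleanly without over-invoking hypotheses not in force; the safe route is to cite the elementary fact that a continuous map from a product to a space with a countable base (or even just to a metric space, after passing to a countable network) has countable support, which is standard.

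For the second assertion, suppose $g\colon Y\to\R$ is continuous. If $g$ is bounded, then since $Y$ is $C^\ast$-embedded in $X$ it extends to a continuous $\tilde g\colon X\to\R$; by the first part of the proposition $\tilde g$ is constant, hence so is $g$. The remaining case is an unbounded $g$, which is not directly covered by $C^\ast$-embedding. Here the idea is to compose with a homeomorphism $\varphi\colon\R\to(0,1)$ (say $\varphi(t)=\tfrac12+\tfrac1\pi\arctan t$); then $\varphi\circ g\colon Y\to(0,1)$ is a bounded continuous function, so it extends to a continuous $\psi\colon X\to[0,1]$. By the first part, $\psi$ is constant, with value $c\in[0,1]$. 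Since $\psi$ restricted to the dense set $Y$ equals $\varphi\circ g$, and $\varphi\circ g$ takes values in $(0,1)$, we get $c\in(0,1)$, so $c$ is in the range of $\varphi$ and $g=\varphi^{-1}(c)$ is constant on $Y$. This handles the unbounded case uniformly and in fact subsumes the bounded one.

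The main obstacle — such as it is — is the countable-support argument underlying the first assertion; everything after that is bookkeeping. An alternative that avoids it entirely: to show $X$ is $\R$-rigid, take $h\colon X\to\R$ continuous and two points $p,q$; for each index $i$, the restriction of $h$ to the line $\{x: x_j=p_j \text{ for } j\neq i\}$ is continuous on (a homeomorphic copy of) $X_i$, hence constant, so $h$ is unchanged under altering any single coordinate, hence unchanged under altering any finite set of coordinates; thus $h$ is constant on the dense subset of points differing from $p$ in finitely many coordinates, and therefore constant on $X$ by density and continuity. This formulation sidesteps $\Delta$-systems and metrizability altogether and is the version I would write up. The passage from "$C^\ast$-embedded" to "handles unbounded functions too" via the arctan trick is the one genuinely new wrinkle relative to the earlier propositions, and I would state it explicitly since it is exactly what makes $\R$-rigidity behave like $T_{3.5}$ rather than merely like "$C^\ast$-rigidity."
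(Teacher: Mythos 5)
Your committed write-up is correct, and for the first assertion it is essentially the paper's own argument: the paper proves by induction that finite products of $\R$-rigid spaces are $\R$-rigid, concludes that every $\sigma$-product in $X$ is $\R$-rigid, and then uses the density of the $\sigma$-product; your ``change one coordinate at a time'' argument is the same reasoning phrased pointwise. You were right to discard your first route, and not merely for reasons of cleanliness: the claim that a continuous map from an arbitrary product into a metric space has countable support is false without extra hypotheses such as pseudo-$\aleph_1$-compactness or separability of the factors (this is exactly why Proposition~\ref{cor.fc.last} carries that hypothesis). For instance, on $D^{\om_1}$ with $D=\om_1$ discrete, the function $f$ defined by $f(x)=1$ if $x_{x_0}=0$ and $f(x)=0$ otherwise is continuous (it is locally determined by the two coordinates $0$ and $x_0$) but does not factor through any countable set of coordinates. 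For the second assertion you differ mildly from the paper: the paper argues by contradiction, truncating a putative non-constant $f$ at $M=\max\{|f(a)|,|f(b)|\}$ so that the truncation is bounded, still separates $a$ from $b$, and extends by $C^\ast$-embeddedness; you instead compress by a homeomorphism of $\R$ onto $(0,1)$ and extend, which proves the statement directly and subsumes the bounded case. Both are correct. One small slip: you invoke the density of $Y$ in $X$, which is neither assumed in the proposition nor needed --- since the extension $\psi$ is constant on all of $X$ by the first part, it is constant on $Y$, and evaluating at any single point of $Y$ already shows the constant lies in $(0,1)$, so $g=\varphi^{-1}(c)$ is constant.
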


\begin{proof}
By induction, one can easily show that the product of finitely many 
$\mathbb{R}$-rigid spaces is $\mathbb{R}$-rigid. Hence, every 
$\sigma$-product $Z$ in $X$ is also $\mathbb{R}$-rigid. Since 
$Z$ is dense in $X$, the product space $X$ is $\mathbb{R}$-rigid  
 as well. 

Suppose for a contradiction that $f$ is a non-constant continuous 
real-valued function on a $C^\ast$-embedded subspace $Y$ of 
$X$. Take points $a,b\in Y$ such that $f(a)\neq f(b)$ and let 
$M=\max\{|f(a)|,|f(b)|\}$. Clearly, $M>0$. Then the function 
$f_M$ on $Y$ defined by 
\begin{equation*}
f_M(x)=\begin{cases}f(x)&\text{if $|f(x)|\leq M$};\\
-M&\text{if $f(x)< -M$};\\ 
M&\text{if $f(x)>M$} 
\end{cases}
\end{equation*}
is continuous and bounded by $M$. Let $g$ be an extension of 
$f_M$ to a continuous function on $X$. Then $g$ is constant, 
because $X$ is $\mathbb{R}$-rigid. Consequently, $f_M$ is 
also constant. However, the definition of $f_M$ implies that
$f(a)=f_M(a)=g(a)$ and $f(b)=f_M(b)=g(b)$, so $f(a)=f(b)$. This 
contradiction proves that the space $Y$ is $\mathbb{R}$-rigid.
\end{proof}

One can complement Proposition~\ref{p:ssp:4} by noting that
if{\hskip1pt} $Y$ is an $\mathbb{R}$-rigid subspace of $X=\prod_{i\in I}X_i$
and the projection $\pi_{i_0}(Y)$ of $Y$ to the factor $X_{i_0}$
is dense in $X_{i_0}$ for some $i_0\in I$, then the space $X_{i_0}$ 
is also $\mathbb{R}$-rigid. This is a simple consequence of the
fact that a continuous image of an $\mathbb{R}$-rigid space is
also $\mathbb{R}$-rigid. Hence, if $Y$ is a dense $\mathbb{R}$-rigid
subspace of $X$, then each factor $X_i$ is $\mathbb{R}$-rigid.

\begin{prop}\label{p:ssp:3}
Suppose that $Y$ is a subspace of $X=\prod_{i\in I} X_i$ that
fills all countable subproducts in $X$ and satisfies one of the 
following conditions:
\begin{enumerate}
\item[\rm{(a)}] the space $X_J=\prod_{i\in J} X_i$ is pseudo-$\aleph_1$-compact  
                       for any countable set $J\subset I$; 
\item[\rm{(b)}] $X_i$ is first-countable for each $i\in I$;
\item[\rm{(c)}] $X_i$ is separable for each $i\in I$.
\end{enumerate}
Then $Y$ is $C$-embedded in $X$.
\end{prop}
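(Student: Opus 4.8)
The plan is to reduce everything to the case of real-valued functions on countable subproducts and then invoke the hypotheses $(a)$--$(c)$. Fix a continuous function $g\colon Y\to\R$; I want to produce a continuous extension $\tilde g\colon X\to\R$. The crucial observation is that under any one of the three conditions, $g$ factors through a projection to a countable subproduct. Under $(a)$ this is immediate from Proposition~\ref{cor.fc.last}: since $X_J$ is pseudo-$\aleph_1$-compact for every countable $J$, a standard argument (or Proposition~\ref{Le:TopPrN}-type reasoning) shows $X$ itself is pseudo-$\aleph_1$-compact, and then Proposition~\ref{cor.fc.last} applied to the metrizable space $Z=\R$ yields a countable $J\sub I$ and a continuous $h\colon\pi_J(Y)\to\R$ with $g=h\circ\pi_J\res{Y}$.

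For cases $(b)$ and $(c)$ I would argue the factorization directly. If every $X_i$ is first-countable, then $X$ has countable tightness at the level needed — more precisely, for the composition $g\colon Y\to\R$ one shows that $g$ depends on only countably many coordinates: pick a countable dense-in-itself approximation, or argue by contradiction that if $g$ depended essentially on uncountably many coordinates one could build an uncountable free sequence violating first-countability of the relevant factors. The cleanest route is the classical Mazur-type theorem: a continuous real-valued function on a dense subset of a product of first-countable (respectively, separable) spaces that fills countable subproducts factors through a countable subproduct. For $(c)$, separability of each $X_i$ makes $X_J$ separable for countable $J$, hence ccc, which again feeds into a factorization lemma. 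In all three cases the output is the same: a countable $J\sub I$ and a continuous $h\colon\pi_J(Y)\to\R$ with $g=h\circ(\pi_J\res Y)$.

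Once the factorization is in hand, the extension is routine. Since $Y$ fills all countable subproducts, $\pi_J(Y)=X_J$, so $h$ is already a continuous function defined on the \emph{whole} subproduct $X_J$. Then $\tilde g=h\circ\pi_J\colon X\to\R$ is continuous, and for $y\in Y$ we have $\tilde g(y)=h(\pi_J(y))=g(y)$, so $\tilde g$ extends $g$. As $g$ was an arbitrary continuous real-valued function on $Y$, this shows $Y$ is $C$-embedded in $X$.

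The main obstacle is establishing the factorization through a countable subproduct in cases $(b)$ and $(c)$, since there $Y$ is merely dense (filling countable subproducts) rather than equal to $X$, and we lack the compactness-type hypothesis of $(a)$; one must be careful that the standard ``real-valued continuous functions on products of separable (or first-countable) spaces depend on countably many coordinates'' results transfer to the dense subspace $Y$. The right framing is to prove the factorization lemma for $Y$ itself: build a countable $J$ coordinate by coordinate so that $g$ is constant on fibers of $\pi_J\res Y$, using separability (resp.\ first countability) to keep the construction countable, and then check continuity of the induced $h$ on $\pi_J(Y)=X_J$ using density of $Y$. After that, everything is formal.
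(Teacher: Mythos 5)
Your handling of cases (a) and (c) is essentially the paper's argument: pseudo-$\aleph_1$-compactness of all countable subproducts passes to $X$ via the $\Delta$-lemma, Proposition~\ref{cor.fc.last} gives a countable $J$ and a continuous $h$ on $\pi_J(Y)$ with $g=h\circ\pi_J$ on $Y$, and since $\pi_J(Y)=X_J$ the composition $h\circ\pi_J$ is the desired extension; for (c) the paper simply notes that separable spaces are pseudo-$\aleph_1$-compact, so (c) reduces to (a) (your detour through ccc amounts to the same reduction and is harmless).

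Case (b), however, contains a genuine gap. The ``Mazur-type'' factorization you invoke is false for products of first-countable factors: it fails even for $Y=X$. Take $D=\omega_1$ discrete, $I=\omega_1$, $X=D^{I}$, and let $W=\bigcup_{0<\alpha<\omega_1}\{x\in X: x_0=\alpha,\ x_\alpha=0\}$. Then $W$ is clopen (if $x\notin W$ and $x_0=\beta\neq 0$, the basic neighborhood $\{y: y_0=\beta,\ y_\beta=x_\beta\}$ misses $W$; if $x_0=0$, use $\{y:y_0=0\}$), so $\chi_W$ is continuous; yet for any countable $J\subseteq I$ and any $Y$ filling countable subproducts one can pick $\alpha\notin J\cup\{0\}$ and points $y,y'\in Y$ with $\pi_J(y)=\pi_J(y')$, $y_0=y'_0=\alpha$, $y_\alpha=0$, $y'_\alpha=1$, so that $\chi_W(y)=1\neq 0=\chi_W(y')$. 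Hence neither $\chi_W$ nor its restriction to $Y$ factors through a countable subproduct, although every factor is (discrete, hence) first-countable; note also that no countable subproduct here is pseudo-$\aleph_1$-compact, so (b) cannot be reduced to (a). Consequently your proposed coordinate-by-coordinate construction cannot be repaired: the statement it is meant to prove is simply untrue, and any proof of (b) must avoid factorization altogether. The paper's route is the Moscow-space machinery: a product of first-countable spaces is a Moscow space \cite[Corollary~1.6.16]{AT}; since $Y$ fills all countable subproducts it meets every nonempty $G_\delta$-set of $X$; and a $G_\delta$-dense subspace of a Moscow space is $C$-embedded \cite[Theorem~6.1.7]{AT}. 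That (or an equivalent $\omega$-continuity argument) is the missing ingredient in your proposal.
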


\begin{proof}
If $X_J$ is pseudo-$\aleph_1$-compact  for any countable $J\sub I$, 
then $X$ are pseudo-$\aleph_1$-compact\,---\,this is a simple consequence
of the $\Delta$-lemma. Hence, every continuous real-valued function 
on $Y$ extends to a continuous function on $X$. Indeed, let 
$g\colon Y\to \mathbb{R}$ be a continuous function. By 
Proposition~\ref{cor.fc.last}, we can find a countable subset $J$ of
$I$ and a continuous function $h$ on $X_J$ such that $g=h\circ
\pi_J\res{Y}$. Let $\tilde{g}=h\circ\pi_J$. The function $\tilde{g}$
on $X$ is continuous and extends $g$, as claimed. 

Assume that the space $X_i$ is first-countable for any $i\in I$. 
Then \cite[Corollary~1.6.16]{AT} implies that $X$ is a Moscow 
space. Since $Y$ fills all countable subproducts in $X$, it follows 
that $Y$ meets every nonempty $G_\delta$-set in $X$. Applying 
\cite[Theorem~6.1.7]{AT} we conclude that $Y$ is 
$C$-embedded in $X$.  

Also, if (c) holds, then the space $X_J$ is separable for any 
countable set $J\subset I$, hence, pseudo-$\aleph_1$-compact. 
So (c) is a special case of (a). 
\end{proof}

The following result is immediate from Propositions~\ref{p:ssp:4} 
and~\ref{p:ssp:3}.

\begin{coro}\label{c:ssp:1}
Let $X_i$ be a separable $\R$-rigid space for each $i\in I$. 
If a subspace $Y$ of $X=\prod_{i\in I} X_i$ fills all countable
subproducts of $X$, then $Y$ is $\R$-rigid.
\end{coro}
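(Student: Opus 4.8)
The plan is to deduce Corollary~\ref{c:ssp:1} directly from the two propositions it cites, with no new combinatorics required. First I would observe that the hypotheses of Corollary~\ref{c:ssp:1} are tailored to trigger case (c) of Proposition~\ref{p:ssp:3}: each factor $X_i$ is separable, and $Y$ fills all countable subproducts of $X$. Hence Proposition~\ref{p:ssp:3} applies and yields that $Y$ is $C$-embedded in $X$. In particular $Y$ is $C^\ast$-embedded in $X$, since $C$-embedding is formally stronger than $C^\ast$-embedding (any bounded continuous real-valued function on $Y$ extends to a continuous function on $X$, which can then be truncated to remain bounded if one wishes, though mere $C$-embedding already suffices for what follows).

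Next I would invoke Proposition~\ref{p:ssp:4}. Since each $X_i$ is $\R$-rigid by hypothesis, that proposition gives two conclusions: the product $X=\prod_{i\in I}X_i$ is $\R$-rigid, and every $C^\ast$-embedded subspace of $X$ is $\R$-rigid. Combining this with the previous paragraph, $Y$ is a $C^\ast$-embedded subspace of the $\R$-rigid space $X$, so $Y$ is $\R$-rigid. That is exactly the assertion of the corollary, so the argument is complete in essentially two lines once the right propositions are lined up.

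There is really no hard step here; the content lives entirely in Propositions~\ref{p:ssp:3} and~\ref{p:ssp:4}, which the corollary is designed to package together. The only point that deserves a word of care is the passage from ``$C$-embedded'' (the output of Proposition~\ref{p:ssp:3}) to ``$C^\ast$-embedded'' (the input of Proposition~\ref{p:ssp:4}): this is the trivial implication that extending all continuous functions in particular extends all bounded ones, so it poses no obstacle. If one wanted to be maximally economical, one could even note that the proof of Proposition~\ref{p:ssp:4} only used the extendability of a single bounded function $f_M$, so $C$-embeddedness is more than enough. Thus the proof I would write is: apply Proposition~\ref{p:ssp:3}(c) to get $Y$ $C$-embedded, hence $C^\ast$-embedded, in $X$; apply Proposition~\ref{p:ssp:4} to the family $\{X_i\}_{i\in I}$ of $\R$-rigid spaces to get $X$ $\R$-rigid and therefore $Y$ $\R$-rigid.
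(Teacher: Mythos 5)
Your argument is correct and is exactly the paper's route: the corollary is stated there as an immediate consequence of Propositions~\ref{p:ssp:3} (case of separable factors, giving $C$-embedding of $Y$ in $X$) and~\ref{p:ssp:4} (product of $\R$-rigid spaces is $\R$-rigid and $C^\ast$-embedded subspaces inherit $\R$-rigidity). Your remark on passing from $C$- to $C^\ast$-embedding is a fine point of care, but nothing more is needed.
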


\section{Korovin orbits. Axioms of separation in Korovin orbits}\label{sec:korovin}
Let $X$ be a nontrivial space, $G$ be an infinite Abelian group,
and $f\colon G\to X$ a Korovin mapping. Clearly, the existence 
of $f$ implies certain restrictions on the cardinalities of $X$ and
$G$ (see Theorem~\ref{Kor-pse-tm}). Our aim is to explore the 
relationship between the separation properties of a space $X$ 
and those of a Korovin orbit $G_f \sub X^G$.

\begin{prop}\label{Prop:T1}
If a space $X$ is not anti-discrete, then every Korovin orbit 
$G_f$ in $X^G$ is a $T_1$-space.
\end{prop}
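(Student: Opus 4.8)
The plan is to exploit the action of $G$ on the orbit $G_f$ by homeomorphisms (this is how $G_f$ becomes a semitopological group) to reduce the problem of separating an arbitrary pair of distinct points to separating one special point from another. Concretely, recall that the bijection $\ii\colon G\to G_f$ is a group isomorphism and that each translation $\ii(g)\cdot(-)$ is a homeomorphism of $G_f$. Hence it suffices to prove: for every $h\in G$ with $h\neq e$ (equivalently, for every $p\in G_f$ with $p\neq f$), there is an open set in $G_f$ containing $f$ but not $\ii(h)=hf$. Once this is done, given arbitrary distinct $p=\ii(g_1)$ and $q=\ii(g_2)$ in $G_f$, apply the translation by $\ii(g_1)^{-1}$ to move $p$ to $f$ and $q$ to $\ii(g_1^{-1}g_2)\neq f$, separate those, and translate back.

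Next I would translate the hypothesis that $X$ is not anti-discrete into usable combinatorial data: there is a point $x^\ast\in X$ and an open set $U\subset X$ with $x^\ast\in U\neq X$, so there is also a point $x^{\ast\ast}\in X\setminus U$. Now fix $h\in G$, $h\neq e$. I want to produce a basic open neighborhood of $f$ in $X^G$, of the form $W=\pi_M^{-1}\big(\prod_{g\in M}U_g\big)$ for a finite set $M\subset G$, such that $f\in W\cap G_f$ but $hf\notin W$. The key is to choose the coordinates so that the defining property of a Korovin mapping (condition $(K_\omega)$, or just the filling of finite subproducts) lets me place $f$ correctly while forcing $hf$ out. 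Since $(hf)(g)=f(gh)$, the value of $hf$ at coordinate $g$ is the value of $f$ at coordinate $gh$; so evaluating the neighborhood $W$ on $hf$ amounts to reading $f$ along the shifted finite set $Mh$. I would pick $M=\{e,h\}$ (two distinct elements because $h\neq e$) and ask, using that $G_f$ fills finite subproducts, for the orbit to contain $f$ with $f(e)=x^\ast$ and $f(h)=x^{\ast\ast}$; wait — $f$ is already fixed, so instead I argue the other direction: set $W=\pi_{\{e,h\}}^{-1}\big(V_e\times V_h\big)$ where $V_e$ is a neighborhood of $f(e)$ and $V_h$ a neighborhood of $f(h)$ chosen so that $f(h)$ witnesses non-containment after the shift. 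More carefully: I need an open $W\ni f$ with $(hf)\notin W$, i.e.\ some coordinate $g\in M$ with $f(gh)\notin V_g$; taking $M=\{e\}$ will not suffice if $f(h)=f(e)$, so I take $M=\{e,h\}$ and must ensure $f(h)\notin V_e$ or $f(h^2)\notin V_h$. If $f(h)\neq f(e)$, a single coordinate and the Hausdorff-free separation of two distinct points by an open set (which only needs $X$ to be $T_0$-like at this pair — but $X$ is merely not anti-discrete) is delicate; the clean route is to invoke $(K_\omega)$: since $\{e,h,h^2,\dots\}$ contains an exact sequence or at least two distinct elements, and since $G_f$ fills the finite subproduct over $\{e,h\}$, there is $g_0\in G$ with $(g_0 f)(e)=f(g_0)\in U$ and $(g_0 f)(h)=f(g_0 h)\notin U$; then $W:=\pi_e^{-1}(U)$ contains $g_0 f$ but not $h(g_0 f)$, and translating by $\ii(g_0)^{-1}$ separates $f$ from $\ii(h)$ as required. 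Unwinding the translation, this gives the $T_1$ separation of $f$ from any $hf$ with $h\neq e$.

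Finally I would assemble the pieces: having separated $f$ from each $\ii(h)$, $h\neq e$, use homogeneity (translations are homeomorphisms) to conclude that singletons $\{p\}$ are closed in $G_f$, equivalently $G_f$ is $T_1$. I expect the main obstacle to be the middle step — getting the coordinate bookkeeping for the shift $g\mapsto gh$ exactly right so that a single finite "window" $M$ (two points suffice because $h\neq e$ and $G$ is a group, hence $eh=h\neq e=eh\cdot h^{-1}$, giving two distinct shifted coordinates) simultaneously keeps $g_0 f$ inside $W$ and pushes $h(g_0 f)$ outside. The resolution is to apply property $(K_\omega)$ to the exact $2$-term sequence $(e,h)$ with target values $(x^\ast, x^{\ast\ast})$ lying respectively in $U$ and in $X\setminus U$, which is possible precisely because $X$ is not anti-discrete; everything else is a routine translation argument.
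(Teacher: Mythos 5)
Your argument is correct, but it is genuinely different from the paper's proof. The paper disposes of this proposition in one line by citing item (1) of Theorem~\ref{t:korovin:1} (from \cite{ReznichenkoTkachenko2024}): if $X$ is not anti-discrete, every \emph{countable} subset of $G_f$ is closed and discrete, so in particular singletons are closed. You instead give a self-contained elementary argument from the definitions: pick an open $U$ with $\emp\neq U\neq X$ and points $x^\ast\in U$, $x^{\ast\ast}\in X\sm U$; for distinct points, reduce by translation homogeneity to separating $f$ from $hf$ with $h\neq e$; use the fact that $G_f$ fills the finite subproduct over $\{e,h\}$ to find $g_0$ with $(g_0f)(e)=x^\ast$ and $(g_0f)(h)=x^{\ast\ast}$; then $\pi_e^{-1}(U)\cap G_f$ contains $g_0f$ but not $h(g_0f)$, and translating by $\ii(g_0)^{-1}$ finishes. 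This checks out (translations of the semitopological group $G_f$ are restrictions of coordinate permutations of $X^G$, hence homeomorphisms, and with $G$ Abelian the conjugation is harmless), and it buys independence from the external, much stronger theorem, at the cost of proving only $T_1$ rather than the closedness and discreteness of all countable sets. Two small points of polish: the translation step is actually superfluous, since the same $g_0$ separates $f$ from $hf$ directly --- $f(g_0)=x^\ast\in U$ while $(hf)(g_0)=f(g_0h)=x^{\ast\ast}\notin U$, so $\pi_{g_0}^{-1}(U)\cap G_f$ already works; and your appeal to $(K_\om)$ for a two-term sequence should be phrased simply as the filling of finite subproducts (or extend $(e,h)$ to an infinite exact sequence with arbitrary remaining values), which you in fact also invoke. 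The exploratory false start in the middle (the attempt with $M=\{e,h\}$ and prescribing values of the fixed $f$) should be excised from a final writeup.
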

\begin{proof}
By  item (1) of Theorem~\ref{t:korovin:1}, every singleton in 
$G_f$ is a closed set.
\end{proof}

\begin{prop}\label{Prop:Hau}
Every (equivalently, some) Korovin orbit $G_f$ in $X^G$ is Hausdorff 
if and only if the space $X$ contains two disjoint nonempty open sets.
\end{prop}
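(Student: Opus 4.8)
The plan is to establish the two implications separately; the direction that needs an idea is ``$X$ has two disjoint nonempty open sets $\Rightarrow$ every $G_f$ is Hausdorff'', where the Korovin condition $(K_\om)$ will be used to produce a single coordinate separating any two given points of $G_f$.

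For the \emph{if} direction I would start with an arbitrary Korovin orbit $G_f\sub X^G$, fix disjoint nonempty open sets $U_0,U_1\sub X$, and take distinct points $p=gf$, $q=hf$ in $G_f$. Then $g\neq h$, so $t:=gh^{-1}\neq e$ in $G$. Since $G$ is infinite, I can extend the two distinct elements $t,e$ to an exact sequence $(g_n)_{n\in\om}\sub G$ with $g_0=t$ and $g_1=e$. Fixing $x_0\in U_0$, $x_1\in U_1$ and arbitrary $x_n\in X$ for $n\geq2$, property $(K_\om)$ yields $v\in G$ with $f(g_nv)=x_n$ for all $n\in\om$; in particular $f(tv)=x_0\in U_0$ and $f(v)=x_1\in U_1$. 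Taking $c=vh^{-1}$ and using the action formula $(gf)(x)=f(xg)$ together with commutativity of $G$, I obtain $\pi_c(p)=f(cg)=f(vt)\in U_0$ and $\pi_c(q)=f(ch)=f(v)\in U_1$. Consequently $\pi_c^{-1}(U_0)$ and $\pi_c^{-1}(U_1)$ are disjoint open subsets of $X^G$ containing $p$ and $q$ respectively, and their traces on $G_f$ separate $p$ and $q$. Hence $G_f$ is Hausdorff.

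For the \emph{only if} direction I would argue contrapositively: assume that any two nonempty open subsets of $X$ meet, and show that no Korovin orbit $G_f$ is Hausdorff. Fix such a $G_f$ (it has at least two points, since $\ii\colon G\to G_f$ is a bijection and $G$ is infinite) and take distinct $p,q\in G_f$. Given arbitrary neighborhoods $O_p\ni p$ and $O_q\ni q$ in $G_f$, choose canonical basic open sets $B_p=\bigcap_{i\in F}\pi_i^{-1}(U_i)$ and $B_q=\bigcap_{i\in F}\pi_i^{-1}(V_i)$ in $X^G$ over a common finite set $F\sub G$ with $p\in B_p$, $q\in B_q$, $B_p\cap G_f\sub O_p$ and $B_q\cap G_f\sub O_q$. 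By hypothesis each $U_i\cap V_i$ is a nonempty open subset of $X$, so $\prod_{i\in F}(U_i\cap V_i)$ is nonempty and open in $X^F$; since $G_f$ fills finite subproducts, $\pi_F(G_f)=X^F$, and therefore $G_f$ meets $B_p\cap B_q=\bigcap_{i\in F}\pi_i^{-1}(U_i\cap V_i)$. Thus $O_p\cap O_q\neq\emptyset$, and $G_f$ is not Hausdorff.

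Combining the two implications proves the equivalence, and it also yields the ``every $\Leftrightarrow$ some'' parenthetical: the condition on $X$ forces \emph{every} Korovin orbit to be Hausdorff, which trivially implies that \emph{some} orbit is Hausdorff, which in turn implies the condition on $X$ by the contrapositive just established. The only step where genuine care is needed is the production of the coordinate $c$ in the \emph{if} direction: passing from the bare inequality $g\neq h$ to a single coordinate on which $p$ and $q$ are pushed into the prescribed disjoint open sets is exactly what $(K_\om)$ supplies, via the exact sequence beginning $t,e,g_2,g_3,\dots$.
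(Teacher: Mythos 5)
Your proposal is correct and follows essentially the same route as the paper: in the forward direction you use $(K_\om)$ with an exact sequence to manufacture a single coordinate at which the two points land in the disjoint open sets (the paper extends $g_0,g_1$ themselves rather than $gh^{-1},e$, a purely cosmetic difference), and in the converse you run the paper's canonical-open-set argument contrapositively, using that $G_f$ fills finite subproducts in place of density. No gaps.
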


\begin{proof}
Let $U$ and $V$ be nonempty disjoint open sets in $X$. Take
distinct elements $u$ and $v$ of a Korovin orbit $G_f\sub X^G$. There 
exist $g_0,g_1\in G$ such that $u=g_0f$ and $v=g_1f$. Choose elements 
$g_n\in G$ for $n>1$ such that the sequence $(g_n)_{n\in\om}$ is exact. 

Pick points $x_0\in U$ and $x_1\in V$. For every $n>1$, we put 
$x_n=x_1$. Condition $(K_\om)$ implies that there exists $g\in G$ such 
that $f(g_ng)=x_n$ for each $n\in\om$. Then $u(g)=(g_0f)(g)=f(gg_0)=x_0$ 
and $v(g)=(g_1f)(g)=f(gg_1)=x_1$. Hence, $G_f\cap\pi_g^{-1}(U)$ and
 $G_f\cap \pi_g^{-1}(V)$ are disjoint open neighborhoods of the points 
 $u$ and $v$ in $G_f$, where $\pi_g\colon X^G\to X_{(g)}$ is the
 projection.

Conversely, assume that a Korovin orbit $G_f$ in $X^G$ is Hausdorff. 
Take distinct elements $u,v\in G_f$ and choose disjoint open neighborhoods
$O_u$ and $O_v$ of $u$ and $v$, respectively, in $G_f$. Then one 
can find canonical open neighborhoods $W_u$ and $W_v$ of $u$ and 
$v$ in $X^G$ such that $W_u\cap G_f\sub O_u$ and $W_v\cap G_f
\sub O_v$. Let $W_u=\prod_{g\in G} W_{u,g}$ and $W_v=\prod_{g\in G}
W_{v,g}$, where $W_{u,g}$ and $W_{v,g}$ are open in $X$ for each
$g\in G$. Let $F$ be a finite subset of $G$ such that $W_{u,g}=W_{v,g}=X$
for all $i\in I\sm F$. If $W_{u,g}\cap W_{v,g}\neq\emp$ for each $g\in F$,
then $W_u\cap W_v$ is a nonempty open set in $X^G$, so the density
of $G_f$ in $X^G$ implies that
$$
\emp\neq W_u\cap W_v\cap G_f = (W_u\cap G_f)\cap (W_v\cap G_f) 
\sub O_u\cap O_v=\emp.
$$ 
This contradiction shows that there exists $g\in F$ such that the nonempty 
open subsets $W_{u,g}$ and $W_{v,g}$ of the space $X$ are disjoint. 
\end{proof}

\begin{prop}\label{Prop:FH}
If a space $X$ admits a non-constant real-valued function, then 
every Korovin orbit $G_f$ in $X^G$ is functionally Hausdorff.
\end{prop}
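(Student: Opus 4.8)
The plan is to mimic the Hausdorff argument in Proposition~\ref{Prop:Hau}, but instead of using two disjoint open sets in $X$ to separate two points of $G_f$, I would use a non-constant continuous real-valued function $\varphi$ on $X$ to build a non-constant continuous real-valued function on $G_f$ separating any prescribed pair of distinct points. So fix a continuous $\varphi\colon X\to\R$ with $\varphi(x_0)\neq\varphi(x_1)$ for some $x_0,x_1\in X$, and take distinct $u=g_0f$ and $g_1f=v$ in a Korovin orbit $G_f\sub X^G$.

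First I would extend $g_0,g_1$ to an exact sequence $(g_n)_{n\in\om}\subset G$ (possible since $G$ is infinite), set $x_n=x_1$ for all $n>1$, and invoke condition $(K_\om)$ to obtain $g\in G$ with $f(g_ng)=x_n$ for all $n$. As computed in the proof of Proposition~\ref{Prop:Hau}, this gives $u(g)=f(gg_0)=x_0$ and $v(g)=f(gg_1)=x_1$. Then the composition $h=\varphi\circ\pi_g\res{G_f}$, where $\pi_g\colon X^G\to X_{(g)}$ is the projection onto the $g$-th coordinate, is a continuous real-valued function on $G_f$ with $h(u)=\varphi(x_0)\neq\varphi(x_1)=h(v)$. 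Since $u$ and $v$ were arbitrary distinct points of $G_f$, this shows that continuous real-valued functions separate points of $G_f$, i.e.\ $G_f$ is functionally Hausdorff.

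There is really no serious obstacle here; the only point requiring a moment's care is verifying that $g_0$ and $g_1$ can indeed be completed to an exact sequence, i.e.\ that one can pick $g_n$ for $n>1$ all distinct from each other and from $g_0,g_1$ — this is immediate because $G$ is infinite and only countably many elements have been used. (If one wishes to avoid even that triviality, one can first note that $u\neq v$ forces $u(g')\neq v(g')$ for some $g'\in G$, which via $u=g_0f$, $v=g_1f$ gives $f(g'g_0)\neq f(g'g_1)$; but the $(K_\om)$ route above is cleaner since it also pins down the \emph{values} $x_0,x_1$ at which $\varphi$ separates.) Everything else is a direct transcription of the first half of the proof of Proposition~\ref{Prop:Hau}, now with a continuous function in place of a pair of disjoint open sets.
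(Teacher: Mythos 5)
Your argument is correct, but it follows a genuinely different route from the paper. The paper's proof is a one-line reduction to item (2) of Theorem~\ref{t:korovin:1}: since $X$ admits a non-constant continuous real-valued function, countable subsets of $G_f$ are $C^\ast$-embedded (and, via item (1), closed and discrete), so the two-point set $\{u,v\}$ immediately yields a continuous $q$ on $G_f$ with $q(u)=0$ and $q(v)=1$. You instead give a direct, self-contained construction: exactly as in the first half of Proposition~\ref{Prop:Hau}, you use $(K_\om)$ to find a coordinate $g\in G$ with $u(g)=x_0$ and $v(g)=x_1$, where $\varphi(x_0)\neq\varphi(x_1)$ for a fixed non-constant continuous $\varphi$ on $X$, and then $\varphi\circ\pi_g\res{G_f}$ separates $u$ and $v$. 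Your version avoids invoking the heavier $C^\ast$-embedding theorem imported from \cite{ReznichenkoTkachenko2024} and uses only the definition of a Korovin mapping, at the cost of redoing the exact-sequence argument; the paper's version is shorter but rests on that black-box result. Two minor points: exactness of $(g_n)_{n\in\om}$ also requires $g_0\neq g_1$, which you leave implicit --- it holds because $g_0=g_1$ would force $u=g_0f=g_1f=v$; and the hypothesis of the proposition should be read as a non-constant \emph{continuous} real-valued function, as you (like the paper's own proof) tacitly do.
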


\begin{proof}
Let $u$ and $v$ be distinct elements of $G_f$. It follows from
item (2) of Theorem~\ref{t:korovin:1} that there exists a continuous
real-valued function $q$ on $G_f$ such that $q(u)=0$ and $q(v)=1$. 
\end{proof}

\begin{prop}\label{Prop:Reg}
A Korovin orbit $G_f$ is a $T_3$-space if and
only if $X$ is a $T_3$-space. 
\end{prop}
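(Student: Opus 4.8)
The plan is to prove both implications. The direction ``$G_f$ is $T_3$ $\Rightarrow$ $X$ is $T_3$'' should follow almost immediately from the machinery already developed: since a Korovin orbit $G_f$ fills all countable subproducts of $X^G$, it in particular fills all finite subproducts, and every factor of $X^G$ is a copy of $X$; hence Proposition~\ref{p:ssp:2} applies verbatim and yields that $X$ is a $T_3$-space. So the only real content is the converse.

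For the converse, assume $X$ is a $T_3$-space and let $u\in G_f$ and let $\ocal$ be an open neighborhood of $u$ in $G_f$. Shrinking $\ocal$, I may assume it has the form $W\cap G_f$ where $W=\pi_F^{-1}\big(\prod_{g\in F}W_g\big)$ is a canonical open set in $X^G$ determined by a finite set $F\sub G$ and open sets $W_g\sub X$ with $u(g)\in W_g$ for $g\in F$. Using that $X$ is $T_3$, pick for each $g\in F$ an open set $V_g\sub X$ with $u(g)\in V_g$ and $\ovl{V_g}\sub W_g$, and set $V=\pi_F^{-1}\big(\prod_{g\in F}V_g\big)$, an open neighborhood of $u$ in $X^G$. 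The claim I want is that $cl_{G_f}(V\cap G_f)\sub W\cap G_f$. Since the closure of a canonical set in a product is canonical, $cl_{X^G}V=\pi_F^{-1}\big(\prod_{g\in F}\ovl{V_g}\big)\sub W$, so $cl_{G_f}(V\cap G_f)\sub (cl_{X^G}V)\cap G_f\sub W\cap G_f=\ocal$. This shows $G_f$ is a $T_3$-space, and in fact this argument uses nothing about Korovin orbits beyond density of $G_f$ in $X^G$ (which holds since $G_f$ fills countable subproducts).

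The only subtlety I anticipate is a bookkeeping one: one must be careful that the regularity ($T_3$) axiom as stated in Section~\ref{sec:as} is a purely local ``shrinking'' condition (given $U\ni x$ open, find open $V\ni x$ with $\ovl V\sub U$), so I do not need $T_1$ or any point-vs-closed-set version, and passing to products and dense subspaces of products of $T_3$-spaces preserves $T_3$ by Proposition~\ref{p:ssp:1}. Thus in the converse direction I could also simply cite Proposition~\ref{p:ssp:1} applied to the class of $T_3$-spaces together with the density of $G_f$, rather than reproving the shrinking argument; I would likely do both the forward direction via Proposition~\ref{p:ssp:2} and the backward direction via Proposition~\ref{p:ssp:1}, making the proof a two-line deduction. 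I expect the ``main obstacle'' is essentially nil here — the real work was already front-loaded into Propositions~\ref{p:ssp:1} and~\ref{p:ssp:2}, and the role of this proposition is to record the clean equivalence for Korovin orbits, exactly parallel to Propositions~\ref{Prop:Hau} and~\ref{Prop:FH}.
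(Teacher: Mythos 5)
Your proposal is correct and matches the paper's own proof, which simply cites Propositions~\ref{p:ssp:1} and~\ref{p:ssp:2} together with the fact that $G_f$ fills all countable (hence finite) subproducts of $X^G$. The explicit shrinking argument you include for the backward direction is fine but redundant, as you yourself note at the end.
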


\begin{proof}
Since $G_f$ fills all countable subproducts in $X^G$, 
the required conclusion follows directly from 
Propositions~\ref{p:ssp:1} and~\ref{p:ssp:2}.
\end{proof}

The case of $T_{3.5}$ separation axiom is considered in the
following result.

\begin{prop}\label{Prop:CoReg}
Let $f\colon G\to X$ be a Korovin mapping and $G_f\sub X^G$ 
the corresponding Korovin orbit. Suppose that one of the following
holds true:
\begin{enumerate}
\item[\rm{(a)}] the space $X^\om$ is pseudo-$\aleph_1$-compact; 
\item[\rm{(b)}] $X$ is first-countable;
\item[\rm{(c)}] $X$ is separable.
\end{enumerate}
Then $G_f$ is a $T_{3.5}$-space if and only if $X$ is a $T_{3.5}$-space. 
\end{prop}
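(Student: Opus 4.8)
The plan is to reduce the statement of Proposition~\ref{Prop:CoReg} to the general results about dense subspaces filling countable subproducts, exactly as was done for the $T_3$ case in Proposition~\ref{Prop:Reg}. The ``if'' direction is the easy half: if $X$ is a $T_{3.5}$-space, then each factor of the product $X^G$ is a $T_{3.5}$-space, so by Proposition~\ref{p:ssp:1} the product $X^G$ and its subspace $G_f$ are $T_{3.5}$-spaces as well. This does not even use the Korovin property. The content is in the ``only if'' direction, and there the idea is to feed the hypotheses (a), (b), (c) into Proposition~\ref{p:ssp:3} to obtain that $G_f$ is $C$-embedded (in particular $C^\ast$-embedded) in $X^G$, and then invoke Proposition~\ref{p:ssp:2+1}.

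More precisely, first I would observe that each of (a), (b), (c) for $X$ translates into the corresponding hypothesis of Proposition~\ref{p:ssp:3} for the product $X=X^G$: if $X^\om$ is pseudo-$\aleph_1$-compact, then since every countable subproduct of $X^G$ is homeomorphic to a subproduct of $X^\om$ (a finite or countable power of $X$), it is pseudo-$\aleph_1$-compact, so condition (a) of Proposition~\ref{p:ssp:3} holds; if $X$ is first-countable, then condition (b) of Proposition~\ref{p:ssp:3} holds verbatim with all factors equal to $X$; and likewise for separability and condition (c). Since $G_f$ fills all countable subproducts of $X^G$ by definition of a Korovin orbit, Proposition~\ref{p:ssp:3} applies and yields that $G_f$ is $C$-embedded in $X^G$, hence in particular $C^\ast$-embedded.

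Next, assuming $G_f$ is a $T_{3.5}$-space, I would apply Proposition~\ref{p:ssp:2+1} with $Y=G_f$ and $X=X^G$: since $G_f$ fills all finite subproducts of $X^G$ (a fortiori, as it fills all countable ones) and is $C^\ast$-embedded in $X^G$, each factor of $X^G$, i.e.\ $X$ itself, is a $T_{3.5}$-space. This closes the ``only if'' direction and completes the proof.

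I do not expect any serious obstacle here; the proposition is essentially a packaging of Propositions~\ref{p:ssp:1}, \ref{p:ssp:2+1} and~\ref{p:ssp:3} together with the defining property of Korovin orbits. The only point requiring a word of care is the routine identification of countable subproducts of $X^G$ with finite-or-countable powers of $X$, which makes hypothesis (a) about $X^\om$ suffice to trigger case (a) of Proposition~\ref{p:ssp:3}; this is exactly the same reduction that underlies the relation between cases (a) and (c) there, so it needs only a sentence.
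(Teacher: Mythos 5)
Your proposal is correct and follows essentially the same route as the paper: the easy direction via Proposition~\ref{p:ssp:1}, and for the converse, Proposition~\ref{p:ssp:3} (using that $G_f$ fills all countable subproducts of $X^G$) to get $C$-embedding, then Proposition~\ref{p:ssp:2+1} to conclude that $X$ is a $T_{3.5}$-space. Your extra sentence identifying countable subproducts of $X^G$ with at most countable powers of $X$ under hypothesis (a) is a harmless elaboration of a step the paper leaves implicit.
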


\begin{proof}
Since $X$ is a $T_{3.5}$-space, so are the spaces $X^G$ and 
$G_f\sub X^G$. Hence, it suffices to verify the converse implication. 
We assume, therefore, that $G_f$ is a $T_{3.5}$-space. 

Since $G_f$ fills all countable subproducts in $X^G$, 
Proposition~\ref{p:ssp:3} implies that $G_f$ is $C$-embedded 
in $X^G$. Then we apply Proposition~\ref{p:ssp:2+1} to conclude 
that $X$ is a $T_{3.5}$-space. 
\end{proof}

\begin{prop}\label{Prop:zd}
If a space $X$ is not anti-discrete and has a base of clopen sets,
then every Korovin orbit $G_f$ in $X^G$ is a regular zero-dimensional
space.
\end{prop}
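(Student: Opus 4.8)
The plan is to show that a Korovin orbit $G_f$ inherits zero-dimensionality from $X$ by combining three ingredients already in place: Proposition~\ref{Prop:Reg} (giving $T_3$), Proposition~\ref{Prop:T1} (giving $T_1$), and the fact that a product of zero-dimensional spaces is zero-dimensional together with the observation that a dense subspace of a zero-dimensional space is zero-dimensional. First I would note that since $X$ has a base of clopen sets and $X$ is not anti-discrete, $X$ is in particular a $T_3$-space: indeed, for any point $x$ and open neighborhood $U$, pick a clopen $V$ with $x\in V\sub U$, so $\overline{V}=V\sub U$. Then Proposition~\ref{Prop:Reg} tells us $G_f$ is a $T_3$-space, and Proposition~\ref{Prop:T1} tells us $G_f$ is a $T_1$-space; hence $G_f$ is regular.

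Next I would establish zero-dimensionality. The product $X^G$ has a base consisting of canonical open sets $\prod_{g\in G} W_g$ where $W_g=X$ for all but finitely many $g$ and each nontrivial $W_g$ is clopen in $X$; such a canonical set is clopen in $X^G$ because finite intersections and coordinate-wise clopen sets remain clopen in the product topology. Therefore $X^G$ has a base of clopen sets. Intersecting these with $G_f$ yields a base of clopen sets for the subspace $G_f$ (the trace of a clopen set is clopen). Thus $G_f$ is a zero-dimensional space, and being also $T_1$ (hence $T_0$), it is regular and zero-dimensional in the usual sense. I would present this as the short proof it is, citing the relevant propositions.

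The main subtlety — really the only thing that needs a moment's care — is the definition of "zero-dimensional" being used and making sure the hypothesis "not anti-discrete" is genuinely needed. Without "not anti-discrete" one still gets a base of clopen sets on $G_f$, but one cannot conclude $G_f$ is $T_1$ (if $X$ is a single point or anti-discrete, $G_f$ can fail to be even $T_1$), so "zero-dimensional" in the sense that includes a separation axiom would fail. I expect no real obstacle here; the statement is essentially a corollary assembled from Propositions~\ref{Prop:T1} and~\ref{Prop:Reg} plus the elementary stability of having a clopen base under products and passage to subspaces. If the paper's convention requires zero-dimensional spaces to be $T_1$ (or Tychonoff), the $T_1$ part supplied by Proposition~\ref{Prop:T1} together with regularity closes that gap, since a regular $T_1$ space with a clopen base is Tychonoff.
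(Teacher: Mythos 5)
Your proof is correct and follows essentially the same route as the paper: a clopen base for $X$ lifts to $X^G$ and its subspace $G_f$, and Proposition~\ref{Prop:T1} supplies $T_1$, giving regularity and zero-dimensionality. Your detour through Proposition~\ref{Prop:Reg} to get $T_3$ is harmless but redundant, since the clopen base of $G_f$ already yields $T_3$ directly.
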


\begin{proof}
Since $X$ has a base of clopen sets, the same is valid for the 
spaces $X^G$ and $G_f$. Also, $X$ is a $T_1$-space, by 
Proposition~\ref{Prop:T1}. It follows that the space $G_f$
is regular and zero-dimensional.
\end{proof}

The subsequent example shows that the conclusion of 
Proposition~\ref{Prop:CoReg} does not hold without 
additional assumptions on $X$. 

\begin{example}\label{Exa:D}
Let $X$ be the product of the discrete space $D=\{0,1\}$ and 
the same set $D$ that carries the anti-discrete topology. Also,
denote by $G$ the boolean group $\mathbb{Z}(2)^\om$. Then,
by  Proposition~\ref{Prop:zd}, every Korovin orbit $G_f$ in $X^G$ 
is completely regular, but $X$ is not even a $T_0$-space. 
\end{example}

Together, Corollory~\ref{c:ssp:1} and the fact that a Korovin orbit 
$G_f$ fills all countable subproducts in $X^G$ imply the following.

\begin{coro}\label{c:korovin:1}
Let $X$ be a separable $\R$-rigid space. Then every Korovin
orbit $G_f$ in $X^G$ is $\R$-rigid.
\end{coro}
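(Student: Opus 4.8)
The plan is to deduce Corollary~\ref{c:korovin:1} directly from the machinery already assembled, essentially by checking that the hypotheses of Corollary~\ref{c:ssp:1} are met. So first I would recall that a Korovin orbit $G_f$ is, by definition, a subspace of the product $X^G$ that fills all countable subproducts of $X^G$; this is literally the defining property of a Korovin mapping stated in Section~\ref{sec:d:korovin}. Here the index set is $G$ and every factor $X_i$ equals $X$, so all factors are separable (by hypothesis) and all factors are $\R$-rigid (by hypothesis). Thus $X^G=\prod_{g\in G}X$ is a product of separable $\R$-rigid spaces and $G_f$ is a subspace filling all countable subproducts.

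Next I would simply invoke Corollary~\ref{c:ssp:1} with $I=G$ and $X_i=X$ for all $i$: since each factor is separable and $\R$-rigid and $Y=G_f$ fills all countable subproducts of $\prod_{g\in G}X$, that corollary gives immediately that $G_f$ is $\R$-rigid. That is the whole argument. (Under the hood, Corollary~\ref{c:ssp:1} itself rests on Proposition~\ref{p:ssp:3}(c), which uses separability of the factors to conclude $G_f$ is $C$-embedded in $X^G$ via the pseudo-$\aleph_1$-compactness route, and on Proposition~\ref{p:ssp:4}, which says a $C^\ast$-embedded subspace of a product of $\R$-rigid spaces is $\R$-rigid — but none of this needs to be re-proved here.)

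There is essentially no obstacle: the only thing to be careful about is making sure the ``fills all countable subproducts'' hypothesis of Corollary~\ref{c:ssp:1} is exactly the condition built into the definition of a Korovin orbit, which it is (condition $\pi_M(G_f)=X^M$ for each countable $M\subseteq G$). One might also note for completeness that $G_f$ is automatically dense in $X^G$, so the $C$-embedding statement makes sense, but this is already subsumed in the cited results. Hence the proof is a one-line reference, and I would write it as: ``Together, Corollary~\ref{c:ssp:1} and the fact that a Korovin orbit $G_f$ fills all countable subproducts in $X^G$ imply the conclusion.''
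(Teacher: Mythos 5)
Your proposal is correct and coincides with the paper's own argument: the corollary is obtained by applying Corollary~\ref{c:ssp:1} (with every factor equal to the separable $\R$-rigid space $X$) to the Korovin orbit $G_f$, which by definition fills all countable subproducts of $X^G$. Nothing further is needed.
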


\section{An example}\label{Sec:2}
The results presented in Sections~\ref{SS1.1} and~\ref{sec:korovin} as  
well as the existence of an infinite regular separable countably compact 
$\mathbb{R}$-rigid space \cite{BaZd} enable us to give a short proof of 
the following theorem announced in the abstract.

\begin{theorem}\label{Th:MEx}
There exists an infinite regular feebly compact $\R$-rigid 
quasitopological group $H$. Therefore, $H$ is neither completely 
regular nor functionally Hausdorff.
\end{theorem}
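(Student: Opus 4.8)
The plan is to build $H$ as a Korovin orbit over the Bardyla--Zdomskyy space. First I would invoke \cite{BaZd} to fix a space $X$ that is infinite, regular, separable, countably compact, and $\mathbb{R}$-rigid; in particular $|X|\le\mathfrak{c}$ and $X$ is not anti-discrete (indeed a non-trivial $T_1$ space). Next, choose a cardinal $\kappa$ with $\kappa=\kappa^\om$ and $\kappa\ge|X|$ --- for instance $\kappa=\mathfrak{c}^{+}$ works, or simply $\kappa=2^{\mathfrak c}$ if one wants to avoid cardinal arithmetic subtleties, since $(2^{\mathfrak c})^\om=2^{\mathfrak c}$ --- and let $G=\mathbb{Z}(2)^{(\kappa)}$ be the free boolean group of rank $\kappa$, an infinite boolean (hence abelian) group with $|G|=\kappa$. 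Theorem~\ref{Kor-pse-tm} then supplies a Korovin mapping $f\colon G\to X$ and a Korovin orbit $G_f\sub X^G$; set $H=G_f$ with its subspace topology and the group operation coming from $G$ via the bijection $\ii$.

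Then I would verify the four asserted properties of $H$ one by one, each reduced to a result already proved above. \emph{Group structure:} since $G$ is boolean, $H=G_f$ is a quasitopological group (as recorded in Section~\ref{sec:d:korovin}). \emph{Infinite:} $H$ is in bijection with the infinite group $G$. \emph{Regular:} $X$ is regular, hence a $T_3$-space, so by Proposition~\ref{Prop:Reg} the orbit $G_f$ is $T_3$; combined with Proposition~\ref{Prop:T1} (applicable since $X$ is not anti-discrete) giving $T_1$, $H$ is regular. \emph{Feebly compact:} $X$ is countably compact, hence so is every finite power $X^J$, and by (the feeble-compactness part of) Proposition~\ref{Le:TopPrN} --- applied to the countable subproducts $X^J$, which are countably compact and therefore feebly compact --- the orbit $G_f$, filling all countable subproducts of $X^G$, is feebly compact. \emph{$\mathbb{R}$-rigid:} $X$ is separable and $\mathbb{R}$-rigid, so Corollary~\ref{c:korovin:1} gives that $G_f$ is $\mathbb{R}$-rigid.

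Finally I would deduce the last sentence: a completely regular space with more than one point, or a functionally Hausdorff space with two distinct points, admits a non-constant continuous real-valued function; since $|H|=\kappa>1$ and $H$ is $\mathbb{R}$-rigid, $H$ is neither completely regular nor functionally Hausdorff. (One should note in passing that $H$ is genuinely not a topological group, consistent with the theory, but this is not needed for the statement.)

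I do not expect a serious obstacle: every ingredient is already assembled in the preceding sections, and the only mild care-point is the cardinal bookkeeping needed to apply Theorem~\ref{Kor-pse-tm}, namely choosing $\kappa$ with $\kappa=\kappa^\om$, $|X|\le\kappa$, and $G$ abelian (boolean) of size exactly $\kappa$ --- all of which is routine. The conceptual work has been front-loaded into Sections~\ref{sec:ssp} and~\ref{sec:korovin}, so the proof of Theorem~\ref{Th:MEx} is essentially a citation-chaining exercise.
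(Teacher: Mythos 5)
Your overall architecture is the same as the paper's (Bardyla--Zdomskyy space, a boolean group of suitable size, Theorem~\ref{Kor-pse-tm}, then Propositions~\ref{p:ssp:1}/\ref{Prop:Reg}, \ref{Le:TopPrN} and Corollary~\ref{c:korovin:1}), but the feeble compactness step has a genuine gap. You assert that $X$ countably compact implies every finite power of $X$ is countably compact, and you then slide to the countable subproducts in order to apply Proposition~\ref{Le:TopPrN}. Countable compactness is not productive: there are separable regular countably compact spaces whose square is not even feebly compact (take $X=A\oplus B$ for Nov\'ak-style countably compact $A,B\subseteq\beta\omega$ with $A\cap B=\omega$; the isolated points $(n,n)$ form an infinite locally finite family in $A\times B$). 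So neither the finite-power claim nor, a fortiori, the feeble compactness of $X^\omega$ --- which is what Proposition~\ref{Le:TopPrN} actually requires, since its hypothesis concerns all \emph{countable} subproducts --- follows from the properties you listed (countably compact, separable, regular, $\mathbb{R}$-rigid). The paper closes exactly this hole by taking from \cite[Theorem~3.4]{BaZd} the stronger property that $X$ is \emph{totally} countably compact (every infinite subset contains an infinite subset with compact closure); this property is preserved by countable powers, so $X^\omega$ is totally countably compact, hence feebly compact, and only then does Proposition~\ref{Le:TopPrN} give feeble compactness of $G_f$. To repair your argument you must record and use this total countable compactness (or some other productive strengthening supplied by the specific space), not mere countable compactness.

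A secondary slip: your claim $|X|\le\mathfrak{c}$ is unjustified --- separability plus regularity only gives $|X|\le 2^{\mathfrak{c}}$, and the paper's space has $|X|=2^{\mathfrak{c}}$ --- so the choice $\kappa=\mathfrak{c}^{+}$ need not satisfy $|X|\le\kappa$. Your fallback $\kappa=2^{\mathfrak{c}}$ is the correct one and matches the paper's $G=\mathbb{Z}(2)^{\mathfrak{c}}$. The remaining steps (quasitopological group structure from $G$ boolean, regularity, $\mathbb{R}$-rigidity via Corollary~\ref{c:korovin:1}, and the final deduction from $\mathbb{R}$-rigidity) are correct and coincide with the paper's.
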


\begin{proof}
Let $X$ be a regular separable, \emph{totally countably compact} 
$\R$-rigid space with $|X|=2^\cont$, where $\cont=2^\om$ (see 
\cite[Theorem~3.4]{BaZd}). Total countable compactness of $X$ 
means that every infinite set $A\sub X$ contains an infinite 
subset $B$ such that the closure of $B$ in $X$ is compact. The 
latter property of $X$ implies that $X^\om$ is also totally countably 
compact. In particular, $X^\om$ is feebly compact.

Let also $G=\mathbb{Z}(2)^\cont$, where $\mathbb{Z}(2)$ is the 
Boolean group $\{0,1\}$. By Theorem~\ref{Kor-pse-tm}, there 
exists a Korovin mapping $f\colon G\to X$. Denote by $H=G_f$ 
the corresponding Korovin orbit considered as a subspace of $X^G$. 

Since $X^\om$ is countably compact, Proposition~\ref{Le:TopPrN} 
implies that the space $G_f$ is feebly compact. The regularity of $X$ 
guarantees that the spaces $X^G$ and $G_f$ are also regular. By 
Corollary~\ref{c:korovin:1}, $G_f$ is $\R$-rigid.
\end{proof}

\section{Open problems}\label{Sec:OP}
This article's results raise several unanswered questions.

Let us call a Korovin orbit $G_f\sub X^G$ \emph{nontrivial} 
if $|X|\geq 2$ and $|G|\geq 2^\om$.

\begin{problem}\label{Pr:1}
Can a non-trivial Korovin orbit be a normal space?
\end{problem}

\begin{problem}\label{Pr:2}
Can a non-trivial Korovin orbit be a paracompact space?
\end{problem}

The next problem is a more general version of the previous one.

\begin{problem}\label{Pr:3}
Let a subspace $D$ of $X=\prod_{i\in I}X_i$ fills countable subproducts 
of $X$, where $|I|>\om$ and each $X_i$ is not compact. Can $D$ be
paracompact?
\end{problem}

\begin{problem}\label{Pr:4}
Let a subspace $D$ of $X=\prod_{i\in I} X_i$ fills all countable (finite) 
subproducts of $X$ and assume that $X$ is a $T_1$-space.
\begin{enumerate}
\item[{\rm (1)}] If $D$ is paracompact, is each factor $X_i$ 
                       also paracompact?

\item[{\rm (2)}] If $D$ normal, is each factor $X_i$ normal?

\item[{\rm (3)}] If $D$ is completely regular, is each factor 
                       $X_i$ completely regular?
\end{enumerate}
\end{problem}

The group $H=G_f$ in Theorem~\ref{Th:MEx} is not separable because 
it is uncountable and all countable subsets of $H$ are closed (see
item (1) of Theorem~\ref{t:korovin:1}). For this reason, we inquire 
as to whether separability can be added to the properties of the 
group $H$:

\begin{problem}\label{Pr:5}
Is there a regular separable semitopological (quasitopological)
group that is not completely regular?
\end{problem}



\begin{thebibliography}{99}

\bibitem{ArhangelskiiHusek2001}
A.~V. Arhangel'skii and M.~Hu{\v{s}}ek, 
\newblock Extensions of topological and semitopological groups 
and the product operation, 
\newblock \emph{Comment. Math. Univ. Carolin.} \textbf{42} 
no.~1 (2001), 173--186.\,\,
\newblock \url{http://hdl.handle.net/10338.dmlcz/119232}

\bibitem{AT} A.V.~Arhangel'skii and M.\,G.~Tkachenko,
\newblock \emph{Topological Groups and Related Structures},
\newblock Atlantis Studies in Mathematics, vol.~1, Atlantis Press, Paris;
World Scientific Publishing Co. Pte. Ltd., Hackensack, NJ, 2008, 781~pp.

\bibitem{BR} T.~Banakh and A.~Ravsky,
\newblock Every regular paratopological group is completely
regular, 
\newblock \emph{Proc. Amer. Math. Soc.} \textbf{145} (2017),
1373--1382.\,\,
\newblock \url{https://doi.org/10.1090/proc/13318}

\bibitem{BaZd} S.~Bardyla and L.~Zdomskyy,   
\newblock On regular separable countably compact 
$\mathbb{R}$-rigid spaces,
\newblock \emph{Israel J.~Math.} \textbf{255} (2023), 783--810.\,\,
\newblock \url{https://doi.org/10.1007/s11856-022-2454-8}

\bibitem{Batikova2009}
B.~Bat{\'\i}kov{\'a}, 
\newblock Completion of quasi-topological groups, 
\newblock \emph{Topol.~Appl.} \textbf{156} no.~12 (2009), 2123--2128.\,\,
\newblock \url{https://doi.org/10.1016/j.topol.2009.03.035}

\bibitem{CN} W.\,W.~Comfort and S.~Negrepontis,
\newblock Continuous functions on products with strong topologies, 
\newblock General Topology and its Relations to Modern Analysis 
and Algebra (1972), 89--92.

\bibitem{Ellis1957-2} R.~Ellis, 
\newblock Locally compact transformation groups, 
\newblock \emph{Duke Math.~J.}
\newblock \textbf{24} no.~2 (1957), 119--125.\,\,
\newblock \url{https://doi.org/10.1215/S0012-7094-57-02417-1}

\bibitem{HernandezTkachenko2006}
C.~Hern{\'a}ndez and M.~Tkachenko, 
\newblock Three examples of pseudocompact quasitopological groups, 
\newblock \emph{Topol.~Appl.} \textbf{153} no.~18 (2006), 3615--3620.\,\,
\newblock \url{https://doi.org/10.1016/j.topol.2006.03.023}

\bibitem{Juh} I.~Juh\'asz,
\newblock Cardinal functions in Topology,
\newblock Math. Centre Tracts 34. Mathematisch Centrum, 
Amsterdam 1971.

\bibitem{Korovin1992}
A.~V. Korovin, 
\newblock Continuous actions of pseudocompact groups and axioms 
of topological group, 
\newblock \emph{Comment. Math. Univ. Carolin.} \textbf{33} no.~2  
(1992), 335--343.\,\,
\newblock \url{http://eudml.org/doc/247414}

\bibitem{Reznichenko1994}
E.~A. Reznichenko,
\newblock  Extension of functions defined on products of pseudocompact 
spaces and continuity of the inverse in pseudocompact groups, 
\newblock \emph{Topol.~Appl.} \textbf{59} no.~3 (1994), 233--244.\,\,
\newblock \url{https://doi.org/10.1016/0166-8641(94)90021-3}

\bibitem{Reznichenko2024gr}  E.~A. Reznichenko, 
\newblock Grothendieck's theorem on the precompactness of subsets
 functional spaces over pseudocompact spaces, 
 \newblock \emph{Funct. Anal. Appl.} \textbf{58} (2024), 409--426.\,\,
 \newblock  \url{https://doi.org/10.1134/S0016266324040051}
 
 \bibitem{ReznichenkoTkachenko2024} 
E.\,A.~Reznichenko and M.\,G.~Tkachenko,
\newblock All countable subsets of pseudocompact quasitopological 
Korovin groups are closed, discrete and $C^\ast$-embedded,
\newblock \emph{Topol.~Appl.} \textbf{341} (2024), 108728.\,\,
\newblock \url{https://doi.org/10.1016/j.topol.2023.108728}

\bibitem{TangLinXuan2021}  Z.~Tang, S.~Lin, and W.-F. Xuan, 
\newblock On some classes of quasitopological groups,
\newblock \emph{Topol.~Appl.} \textbf{295} (2021), 107671.\,\,
\newblock \url{https://doi.org/10.1016/j.topol.2021.107671}

\end{thebibliography}
\end{document}